\def\qed{\hfill $\Box$ \medskip \par}
\def\Box{\framebox[10pt]{\rule{0pt}{3pt}}}
   \newtheorem{theorem}{Theorem}[section]
   \newtheorem{theorem}{Theorem} 
\newcommand{\uw}{{\underline{{w}}}}
\renewcommand{\L}{\mathbb{L}}
\newcommand{\Al}{\mathcal{A}_n({\kappa})}
\newcommand{\Sn}{\Sigma_n}
\newcommand{\Z}{\mathbb{Z}}
\newcommand{\E}{\mathcal{R}}
\renewcommand{\S}{\mathfrak{S}}
\newcommand{\LG}{\mathfrak{L}}
\newcommand{\KLG}{\mathfrak{L}^K}
\newtheorem{remark}[theorem]{Remark}
\newtheorem{proposition}[theorem]{Proposition}
\newtheorem{lemma}[theorem]{Lemma}
\newtheorem{definition}[theorem]{Definition}
\newtheorem{corollary}[theorem]{Corollary}
\newtheorem{example}[theorem]{Example}
\newcommand{\kk}{{{\mathsf{k}}}}
\newcommand{\Xo}{{\mathring{X}}}
\newcommand{\Xt}{{\widetilde{X}}}
\newcommand{\Zt}{{\widetilde{Z}}}
\renewenvironment{proof}{{\flushleft \it Proof.}}{\hfill $\square$ \vspace{2mm}}
\DeclareMathOperator{\supp}{Supp}
\DeclareMathOperator{\GL}{GL}
\DeclareMathOperator{\id}{Id}
\DeclareMathOperator{\Gr}{Gr}
\newcommand{\scal}[1]{\langle #1 \rangle}
\newcommand{\la}{{{\lambda}}}
\newcommand{\p}{{{\mathbb{P}}}}
\renewcommand{\P}{{{\mathcal{P}}}}
\title{Smooth Schubert
varieties and generalized Schubert polynomials in algebraic
cobordism of Grassmannians}
\author{Jens Hornbostel and Nicolas Perrin\footnote{The second author was supported by a public grant as part of the
Investissement d'avenir project, reference ANR-11-LABX-0056-LMH,
LabEx LMH.}}
\date{\today}
\begin{document}


\maketitle

\begin{abstract}
We provide several ingredients towards a generalization
of the Little\-wood-Richardson rule
from Chow groups to algebraic cobordism. In particular,
we prove a simple product-formula for multiplying classes of smooth Schubert 
varieties with any Bott-Samelson class in algebraic cobordism of the 
grassmannian. We also establish some results for generalized Schubert 
polynomials for hyperbolic formal group laws.
\end{abstract}

\section{Introduction}

Throughout the article, we fix a base field $\kk$  with ${\rm char}(\kk)=0$. Recall that for $G$ a reductive group over $\kk$ and $P$ a parabolic subgroup of $G$, there exist Borel type presentations of the algebraic cobordism $\Omega^*(G/P)$ of the homogeneous space $G/P$, see \cite{HK,HM}. In general, see \cite{LM}, \cite{LP} for the foundations on $\Omega^*(X)$ for a smooth projective variety $X$ over $\kk$. 

In this article, we adapt an alternative more geometric point of view.
Namely, it is known that the basis of any of these cobordism rings may be described via geometric generators, using resolutions of Schubert varieties, see below. Schubert calculus consists in multiplying these basis elements. One of the new features when passing from Chow groups to cobordism is the need of resolving the singularities of Schubert varieties. There are therefore many possible bases since a given basis element depends on the choice of a resolution of a Schubert variety. In this paper we shall mostly consider Bott-Samelson resolutions. Let us mention that some formulas for multiplication with divisor classes are already avalaible, see \cite{CPZ}, \cite{HK} and that in recent preprints of Hudson and Matsumura \cite{HM}, \cite{HM2}, Giambelli-type formulas are obtained for special classes and for a group $G$ of type $A$. There are several other recent preprints on related questions by Lenart-Zainoulline and others, see e.g. \cite{LZ}.

We also focus on groups of type $A$. In the first part we consider classes of smooth Schubert varieties in grassmannians and prove a formula for multiplying the class of a smooth Schubert variety with the class of any Bott-Samelson resolution. Several years ago, Buch achieved a beautiful generalization of the classical Littlewood-Richardson rule \cite{Bu} for $K$-theory instead of Chow groups, building on previous work of Lascoux-Schutzenberger, Fomin-Kirilov and others. In the language of formal group laws (FGL), Buch has generalized the Littlewood-Richardson rule from the additive FGL to the multiplicative FGL. In the second part, we analyse the work of Fomin and Kirilov \cite{FK1} and \cite{FK2} used by Buch, and generalize parts of it to other formal group
laws. One might hope that ultimately this will be part of a Littlewood-Richardson rule for the universal case, that is a complete Schubert calculus for
algebraic cobordism of Grassmannians.

\medskip 

Recall \cite{LM} that algebraic cobordism is the universal oriented algebraic cohomology theory on smooth varieties over $\kk$. Its coefficient ring is the Lazard ring $\L$, see \cite{La}. For any homogeneous space $X = G/P$ with $G$ reductive and $P$ a parabolic subgroup of $G$, we have a cellular decomposition of $X$ given by the $B$-orbits ($B \subset P$ a Borel subgroup of $G$) called Schubert cells and denoted by $(\Xo_w)_{w \in W^P}$ where $W^P$ is a subset of the Weyl group $W$. Choosing resolutions $\Xt_w \to X_w$ of the closures $X_w$ of $\Xo_w$ defines an additive basis of $\Omega^*(X)$ (see \cite[Theorem 2.5]{HK}). Schubert calculus aims at understanding the product in terms of these basis elements.

Write $X = \Gr(k,n)$ for the Grassmannian variety of $k$-dimensional linear subspaces in $\kk^{n}$. This is a homogeneous space of the form $G/P$ with $G = \GL_n(\kk)$ and $P$ a maximal parabolic subgroup of $G$. In the first part of the article, we compute some simple product formulas in $\Omega^*(X)$. For grassmannians, there is another indexing set for Schubert cells and their closures in terms of partitions, and we shall use this notation in the grassmannian case. In the following statement, $\la$ is a partition associated to a Schubert variety $X_\la$, that is the closure of the Schubert cell $\Xo_\lambda$ (see Subsection \ref{subsec:bruhat}). Recall also that for the grassmannian $X$, all Bott-Samelson resolutions of the Schubert variety $X_\lambda$ are isomorphic over $X$. We denote by $\Xt_\la$ this unique Bott-Samelson resolution. Finally, recall that any smooth Schubert variety in $X$ is of the form $X_{b^a}$ with $b^a$ the partition with $a$ parts of size $b$. 

Before stating the main result of section 2, recall the definition of the dual partition (see Section \ref{subsec:bruhat} for more details): for a partition $\lambda$ contained in the $k \times (n - k)$ rectangle $R$, then $\lambda^\vee$ denotes the dual partition obtained by taking the complement of $\lambda$ in $R$. For a partition $\mu$ in the $b \times a$ rectangle, the dual partition $\mu^{\vee}$ is in the $b \times a$ rectangle.

\begin{theorem}[Corollary \ref{coro-cobor}]
Let $\la \in \P(k,n)$. Then in $\Omega^*(X)$, we have
$$[X_{b^a}] \cdot [\Xt_\la] =  
\left\{\begin{array}{ll}
[\Xt_ {(\la^\vee)^{\vee_Z}}] & \textrm{for $\la \geq (b^a)^\vee$}, \\
0 & \textrm{for $\la \not \geq (b^a)^\vee$}. \\
\end{array}\right.$$
\end{theorem}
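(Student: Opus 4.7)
My plan is to exploit the smoothness of $X_{b^a}$, which implies that its cobordism class coincides with the fundamental class $[X_{b^a}] = i_*(1)$ of the closed immersion $i : X_{b^a} \hookrightarrow X$, without any correction coming from the formal group law. Applying the projection formula to the Bott--Samelson resolution $\pi_\la : \Xt_\la \to X$ then gives
$$[X_{b^a}] \cdot [\Xt_\la] \;=\; (\pi_\la)_*\, \pi_\la^*[X_{b^a}],$$
so the problem reduces to computing the refined pullback $\pi_\la^*[X_{b^a}]$ on the Bott--Samelson variety and pushing it forward to $X$.

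The heart of the argument would be a geometric identification of the scheme-theoretic preimage $Z := \pi_\la^{-1}(X_{b^a}) \subset \Xt_\la$. Because $X_{b^a}$ is a sub-Grassmannian (concretely, the orbit closure of a suitable Levi subgroup corresponding to fixing a flag segment inside $\kk^n$), I would argue that $Z$ is smooth of the expected codimension and carries a natural Bott--Samelson structure: the Schubert conditions defining $X_{b^a}$ cut out a sub-word of the word defining $\Xt_\la$. The resulting variety $Z$ should then be isomorphic, as a variety over $X$, to $\Xt_{(\la^\vee)^{\vee_Z}} \to X_{(\la^\vee)^{\vee_Z}} \subset X$. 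The combinatorics of the partition $(\la^\vee)^{\vee_Z}$ should fall out of this: the first dualization inside the $k \times (n-k)$ rectangle tracks the ``room'' left inside $X$ beyond $X_\la$, and the second dualization inside the $b \times a$ rectangle records the effect of restricting to $X_{b^a}$. Granting this identification, $\pi_\la^*[X_{b^a}] = [Z \hookrightarrow \Xt_\la]$ and its pushforward equals $[\Xt_{(\la^\vee)^{\vee_Z}}]$.

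For the vanishing case, I would check that $\la \not\geq (b^a)^\vee$ in the Bruhat order forces $X_\la \cap X_{b^a} = \emptyset$, hence $Z = \emptyset$ because $\pi_\la$ is surjective onto $X_\la$. This is the standard dimension-count argument familiar from the Chow case and transfers directly to cobordism once the projection formula is in place.

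The main obstacle I anticipate is the precise geometric identification in the second paragraph: verifying that $Z$ is not merely \emph{some} resolution of $X_{(\la^\vee)^{\vee_Z}}$ but is in fact the Bott--Samelson resolution $\Xt_{(\la^\vee)^{\vee_Z}}$ coming from the correct reduced word. This requires tracking how each simple reflection in the word for $\la$ interacts with the defining conditions of $X_{b^a}$, and it is here that the smoothness of $X_{b^a}$ enters crucially: for a general Schubert variety the preimage would fail to be smooth, and the class $\pi_\la^*[X_\mu]$ would pick up genuine formal group law corrections from the resolution of singularities, producing a sum of Bott--Samelson classes rather than a single one.
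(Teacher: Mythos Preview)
Your overall strategy---projection formula plus an explicit identification of the fiber product with a Bott--Samelson variety---is exactly the paper's approach. But there is a genuine error in the execution: you propose to intersect $\Xt_\la$ with $X_{b^a}$ itself, and both of these are $B$-stable. Consequently neither branch of your argument works as stated.

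For the vanishing case, your claim that $\la \not\geq (b^a)^\vee$ forces $X_\la \cap X_{b^a} = \emptyset$ is false. Every Schubert variety $X_\mu$ contains the $B$-fixed point $E_k$, so $X_\la \cap X_{b^a}$ is \emph{never} empty. For the non-vanishing case, the preimage $Z = \pi_\la^{-1}(X_{b^a})$ does not have the expected codimension: for instance if $\la \leq b^a$ then $X_\la \subset X_{b^a}$ and $Z = \Xt_\la$. So $[Z \hookrightarrow \Xt_\la]$ does not compute $\pi_\la^*[X_{b^a}]$.

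The paper repairs this by first replacing $X_{b^a}$ with the opposite Schubert variety $X^{(b^a)^\vee}$, which is a $\GL_n$-translate of $X_{b^a}$ (hence smooth with the same cobordism class) but is $B^-$-stable rather than $B$-stable. Now $\Xt_\la$ and $X^{(b^a)^\vee}$ are in general position, Kleiman--Bertini guarantees the fiber product $\Xt_\la \times_X X^{(b^a)^\vee}$ is smooth of the correct dimension (or empty precisely when $\la \not\geq (b^a)^\vee$), and the paper then gives an explicit isomorphism of $X$-schemes from this fiber product to a translate of $\Xt_{(\la^\vee)^{\vee_Z}}$. Once you make this substitution, your outline becomes essentially the paper's proof.
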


Note that for Chow groups or for $K$-theory, the above results are well known and follow from the Pieri formulae (see for example \cite{Ma} for the Chow group case and \cite{Bu} for $K$-theory, by which we always mean $K_0$). 

Note also that there are other natural resolutions of Schubert varieties considered in the litterature such as Zelevinsky's resolution \cite{Ze}. We believe that for those resolutions (which contain as a special case the resolutions considered in the cobordism Giambelli formulae of Hudson and Matsumura \cite{HM}) similar formulas should exist for the multiplication with the class of smooth Schubert varieties.

\medskip

In the second part (sections 3 and 4), inspired by Buch's method for giving a Littlewood-Richardson rule for $K$-theory, we have a closer look at generalized Schubert polynomials for cobordism. Let us recall first that for the full flag variety $X = G/B$ with $G = \GL_n(\kk)$ and $B$ a Borel subgroup, there is a Borel-type presentation of the cobordism ring (see \cite[Theorem 1.1]{HK}):

\begin{theorem}
We have an isomorphism
$\Omega^*(X) \simeq \L[x_1,\ldots,x_n]/S,$
where $\deg(x_i) = 1$ for all $i \in [1,n]$ and $S$ is the ideal generated by homogeneous symmetric polynomials of positive degree.
\end{theorem}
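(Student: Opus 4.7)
\medskip

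\noindent \textbf{Proof plan.} The plan is to construct an explicit $\L$-algebra homomorphism $\phi \colon \L[x_1, \ldots, x_n] \to \Omega^*(X)$, show that $S$ lies in its kernel, and then check surjectivity and injectivity of the induced map separately. On $X = \GL_n(\kk)/B$ we have the tautological flag $0 = \mathcal{V}_0 \subset \mathcal{V}_1 \subset \cdots \subset \mathcal{V}_n = \O_X^n$ whose successive quotients $\mathcal{L}_i = \mathcal{V}_i/\mathcal{V}_{i-1}$ are line bundles, and I would define $\phi(x_i) := c_1^\Omega(\mathcal{L}_i)$.

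To see $\phi$ kills symmetric polynomials of positive degree, the splitting principle for oriented cohomology theories (see \cite{LM}) identifies $e_k(x_1, \ldots, x_n)$ with the Chern class $c_k^\Omega(\O_X^n)$, which vanishes for $k \geq 1$ because the total bundle is trivial. Thus $\phi$ descends to a map $\bar\phi \colon \L[x_1, \ldots, x_n]/S \to \Omega^*(X)$.

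For surjectivity I would exploit the iterated projective bundle structure of $X$: writing $F_i$ for the variety of partial flags of length $i$ in $\kk^n$, the forgetful maps $F_i \to F_{i-1}$ realise $X = F_n$ as a tower of projective bundles over $\mathrm{Spec}(\kk)$. The projective bundle formula for algebraic cobordism (cf.\ \cite{LM}) makes $\Omega^*(F_i)$ free over $\Omega^*(F_{i-1})$ on the powers of the first Chern class of the natural tautological line bundle on $F_i$, so inductively $\Omega^*(X)$ is generated as an $\L$-algebra by $c_1^\Omega(\mathcal{L}_1), \ldots, c_1^\Omega(\mathcal{L}_n)$; hence $\bar\phi$ is surjective.

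Finally, a rank count concludes the proof. By the cellular decomposition of $X$ into Schubert cells (cf.\ \cite[Theorem~2.5]{HK}), $\Omega^*(X)$ is a free $\L$-module of rank $|W| = n!$. On the other hand $\L[x_1, \ldots, x_n]/S = \L \otimes_{\Z} \Z[x_1, \ldots, x_n]/S_\Z$, and the classical coinvariant ring is free of rank $n!$ over $\Z$ (e.g.\ with the Schubert polynomial basis indexed by $W$). Since any surjection between free modules of the same finite rank over a commutative ring is automatically an isomorphism, $\bar\phi$ is an isomorphism. The main obstacle is the inductive projective bundle step, where one has to verify that the $x_i$ really correspond to the natural first Chern classes at each level of the tower: in the cobordism setting the naive Whitney-sum identity fails, and the identification must be made carefully using the FGL-compatible splitting principle of \cite{LM}.
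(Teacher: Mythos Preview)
The paper does not supply a proof of this theorem: it is quoted verbatim from \cite[Theorem~1.1]{HK} and used as background for the rest of the article. There is therefore no in-paper argument to compare your proposal against.

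That said, your outline is the standard argument (and is essentially the one given in \cite{HK}): define the map via first Chern classes of the tautological line bundles, kill the symmetric functions by the splitting principle, get surjectivity from the iterated projective bundle formula, and conclude by a rank count using the cellular basis of \cite[Theorem~2.5]{HK}. All of these steps are correct.

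The only point I would correct is your closing caveat. In Levine--Morel algebraic cobordism the Whitney sum formula for total Chern classes \emph{does} hold (see \cite[Proposition~4.1.15]{LM}), so the identity $e_k(x_1,\ldots,x_n)=c_k^{\Omega}(\O_X^n)=0$ is immediate once you set $x_i=c_1^{\Omega}(\mathcal{L}_i)$; there is no obstacle here. What fails for a non-additive FGL is the additivity of $c_1$ on tensor products of line bundles, which is a different statement. Concretely, in the projective-bundle tower the natural generator at the $i$-th stage is $c_1^{\Omega}(\mathcal{L}_i^{\vee})=\chi(x_i)$ rather than $x_i$; since $\chi(x_i)=-x_i+(\text{higher order in }x_i)$ and the $x_i$ are nilpotent, this generates the same $\L$-subalgebra, and your surjectivity and rank-count arguments go through unchanged.
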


In particular, given a Schubert variety $X_w$ and a Bott-Samelson resolution $\Xt_\uw \to X_w$ we may write the class $[\Xt_\uw] \in \Omega^*(X)$ as a polynomial $\LG_\uw$ in the $(x_i)_{i \in [1,n]}$. In \cite{FK1,FK2}, Fomin and Kirillov give a very nice description of such polynomials for the $K$-theory case. The work of Buch \cite{Bu} builds on these results. In section 3, we compare the generalized Schubert polynomials for cobordism with those for $K$-theory (called {\em Grothendieck polynomials}), see Corollary \ref{differ}. For this, we have to restrict to hyperbolic formal group laws, that is to elliptic cohomology. Choosing a suitable generalization of the Hecke algebra, we are also able to generalize the main theorem of \cite{FK1} from $K$-theory to elliptic cohomology, see Theorem \ref{thmFK1hyp}.

\medskip

In the last section, we combine techniques and results from sections
2 and 3 to compute some explicit generalized Schubert polynomials.
In particular, we show that some of the smooth Schubert varieties
satisfy a certain symmetry, see Corollary \ref{symmetry}. For 
generalized Schubert polynomials associated to other cells, this is no longer
true already when looking at $\Gr(2,4)$, see Proposition \ref{gr24}.

\medskip

We have tried to present these two parts in a way that they can be read
essentially independently of each other. However, we emphasize that they both are 
partial solutions to the quest of a Schubert calculus for
arbitrary orientable cohomology theories. Both parts reflect 
that for general formal group laws with operators not satisfying 
the naive braid relation,
Schubert cells will lead to different elements in the corresponding
generalized cohomology theory. On the geometric side, we have different 
resolutions of a given Schubert cell, and on the combinatorial side we have 
different reduced words for a given permutation.
We hope that forthcoming work will combine these two aspects,
leading to a better understanding of general Schubert calculus.

\medskip

\section{Product with smooth Schubert varieties}
\label{section1}

\subsection{Notation}
\label{subsec:bruhat}

Let $X = \Gr(k,n)$ be the grassmannian of $k$ dimensional subspaces in $E = \kk^n$. Denote by $(e_i)_{i \in [1,n]}$ the canonical basis of $\kk^n$. 
Denote by $B$ the subgroup of upper-triangular matrices in $\GL_n(\kk)$ by $B^-$ the subgroup of lower-triangular matrices and by $T = B \cap B^-$ the subgroup of diagonal matrices. For any subset $I \subset [1,n]$ write $E_I$ for the span $\scal{e_i \ | \ i \in I}$. Set $E_i = E_{[1,i]}$ and $E^i = E_{[n+1-i,n]}$ for $i \in [1,n]$.

Call partition any non increasing sequence $\la = (\la_i)_{i \geq 1}$ of integers. The length of the partition is $\ell(\la) = \max\{i \ | \ \la_i \neq 0 \}$. For $\la$ of length $k$, we identify $\la$ with its first $k$ parts \emph{i.e.} with $(\la_i)_{i \in [1,k]}$. The weight of $\la$ is $| \la | = \sum_i \la_i$. We will also use the pictural description via Young diagrams which are left aligned arrays of $| \la |$ boxes with $\la_i$ boxes on the $i$-th line for all $i \geq 1$. A partition $\la$ fits in the $k \times (n-k)$ rectangle if its Young diagram does or equivalently if $\ell(\la) \leq k$ and $\la_1 \leq n - k$. Denote by $\P(k,n)$ the set of partitions fitting in the $k \times (n-k)$ rectangle. For $\la \in \P(k,n)$ denote by $\la^\vee  \in \P(k,n)$ its dual partition defined by $\la_i^\vee = n - k - \la_{k + 1 - i}$ for $i \in [1,k]$. We have $| \la^\vee | =  k(n-k) - | \la |$. Define $\la \leq \mu$ if $\la_i \leq \mu_i$ for all $i$.

\medskip

Recall the Bruhat decomposition: the $B$-orbits $(\Xo_\la)_{\la \in \P(k,n)}$ form a cellular decomposition. The same result holds for the $B^-$-orbits  $(\Xo^\la)_{\la \in \P(k,n)}$. Indeed these orbits are isomorphic to affine spaces: $\Xo_w \simeq \mathbb{A}_\kk^{| \la |}$ and $\Xo^\la \simeq \mathbb{A}_\kk^{\dim X - | \la |}$ as can easily be deduced from their explicit descriptions:
$$\begin{array}{l}
\Xo_\la = \{ V_k \in X \ | \ \dim(V_k \cap E_{i + \lambda_{k+1-i}}) = i \textrm{ for all $i \in [1,k]$}  \} \textrm{ and } \\
\Xo^{\la} = \{ V_k \in X \ | \ \dim(V_k \cap E^{i + n - k - \lambda_{i}}) = i \textrm{ for all $i \in [1,k]$}  \}.
\end{array}$$
Note that with this definition we have $\Xo^{\la^\vee} = w_X \cdot \Xo_\la$ where $w_X$ is the matrix permutation associated to the permutation $i \mapsto n + 1 - i$ of $[1,n]$. 
Denote by $X_\la$ the closure of $\Xo_\la$ and by $X^\la$ the closure of $\Xo^\la$. We have
$$\begin{array}{l}
X_\la = \{ V_k \in X \ | \ \dim(V_k \cap E_{i + \lambda_{k+1-i}}) \geq i \textrm{ for all $i \in [1,k]$}  \} \textrm{ and } \\
X^{\la} = \{ V_k \in X \ | \ \dim(V_k \cap E^{i + n - k - \lambda_{i}}) \geq i \textrm{ for all $i \in [1,k]$}  \}.
\end{array}$$
Inclusion induces the order on partitions: $X_\la \subset X_\mu \Leftrightarrow \la \leq \mu$. 

\begin{remark}
The bases $([X_\la])_{\la \in \P(k,n)}$ and $([X^\la])_{\la \in \P(k,n)}$ are dual bases in $CH^*(X)$ (see \cite[Proposition 3.2.7]{Ma}).
Since $X^{\la^\vee} = w_X \cdot X_\la$ we see that $([X_\la])_{\la \in \P(k,n)}$ and $([X_{\la^\vee}])_{\la \in \P(k,n)}$ are aloso dual bases. 
Note that this is no longer true in $K$-theory.
\end{remark}

\subsection{Smooth Schubert varieties, Bott-Samelson resolution and cobordism}
\label{subsec:smooth}

The smooth Schubert varieties in $X$ are sub-grassmannians (see for example 
\cite[Theorem 6.4.2]{LB} 
or \cite[Theorem 1.1]{GR}, and \cite{brion-polo} or \cite{gorenstein} for more details on the singular locus and the type of singularities).
The partitions corresponding to these smooth Schubert varieties are of the form $\la = (\la_1,\cdots,\la_k)$ with $\la_i = b$ for $i \in [1,a]$ and $\la_i = 0$ for $i > a$ for some integers $a \in [1,k]$ and $b \in [1,n-k]$. Denote this permutation by $\la = b^a$. As a variety we have
$$\begin{array}{l}
X_{b^a} = \{ V_k \in X \ | \ E_{k - a} \subset V_k \subset E_{k + b} \} \textrm{ and } \\
X^{{b^a}^\vee} = \{ V_k \in X \ | \ E^{k - a} \subset V_k \subset E^{k + b} \}.
\end{array}$$
Moreover we have $X_{{b^a}} \simeq \Gr(a,a+b) \simeq X^{{b^a}^\vee}$.

\medskip

As already mentioned,  Schubert varieties are in general singular. There exist several resolutions of singularities. We recall here the Bott-Samelson resolutions of Schubert varieties which were first introduced by Bott and Samelson \cite{BS} as well as Hansen \cite{H} and Demazure \cite{D} for full flag
varieties. These constructions and their properties 
carry over easily to partial flags $G/P=\Gr(k,n)$. See e.g.
\cite{Fu}, \cite{LB} for more details.
We give here an explicit description of these resolutions in the spirit of configuration spaces (see \cite{MA} or \cite{small-reso}). Note also that for Schubert varieties in $X$, these resolutions are \emph{canonical} in the sense that they do not depend on the choice of a reduced expression. 

For a partition $\la$ and a pair of integers $(i,j)$ write $(i,j) \in \la$ if $i \in [1,k]$ and $j \in [1,\la_i]$ and $(i,j) \not\in \la$ else. Define $V_{(i,j)} = E_{k + j - i}$ for all $(i,j) \not\in \la$ where $E_i$ is the zero space for $i \leq 0$ and $E_i = \kk^n = E_n$ for $i \geq n$. Define $Y_\la = \prod_{(i,j) \in \la} \Gr(k + j - i , n)$. Set 
$$\Xt_\la = \{ (V_{(i,j)})_{(i,j) \in \la} \in Y_\la \ | \ V_{(i+1,j)} \subset V_{(i,j)} \subset V_{(i,j+1)} \textrm{ for all $(i,j) \in \lambda$} \}.$$
The projection $\pi_\la : \Xt_\la \to X$ defined by $\pi_\la((V_{(i,j)})_{(i,j) \in \la}) = V_{1,1}$ induces a birational morphisms onto $X_\la$. Furthermore, one easily checks that $\Xt_\la$ has 
the structure of a tower of $\p^1$-bundles so that $\Xt_\la$ is smooth. 
  The morphisms $\pi_{\la} : \Xt_{\la} \to X_{\la}$ are called the Bott-Samelson resolutions of $X_\la$.

These resolutions define classes $[\pi_\la : \Xt_\la \to X]$ in the cobordism $\Omega^*(X)$ of $X$. We write $[\Xt_\la]$ for these classes. The classes $([\Xt_\la])_{\la \in \P(k,n)}$ form a basis in any oriented cohomology theory and especially in cobordism:
$$\Omega^*(X) = \bigoplus_{\la \in \P(k,n)} \L[\Xt_\la],$$
where $\L$ is the Lazard ring, see \cite{HK}.

\subsection{Products in cobordism}

We want to understand the product in $\Omega^*(X)$ with the classes $[X_{b^a}]$. Note that the class $[X_{b^a}]$ is well defined without considering any resolution since $X^{b^a} \simeq \Gr(a,a+b)$ is smooth, hence its cobordism class is well defined. 

\subsubsection{Sub-grassmannians}

Let $Z = \Gr(a,a+b)$ be the grassmannian of $a$-dimensional vector subspaces of $\kk^{a+b}$. Let $(f_i)_{i \in [1,a+b]}$ be the canonical basis of $\kk^{a+b}$. Define $F_i = \scal{f_j \ | \ j \in [1,i]}$ and $F^i = \scal{f_j \ | \ j \in [a+b+1-i,a+b]}$. For $\la \in \P(a,a+b)$ a partition contained in the $a \times b$ rectangle define the Schubert variety in $Z$ (as above in $X$):
$$\begin{array}{l}
Z_\la = \{ V_a \in Z \ | \ \dim(V_a \cap F_{i + \la_{a+1-i}}) \geq i \textrm{ for all $i \in [1,a]$}  \} \textrm{ and } \\
Z^\la = \{ V_a \in Z \ | \ \dim(V_a \cap F^{i + b - \la_{i}}) \geq i \textrm{ for all $i \in [1,a]$}  \}.
\end{array}$$
If $w_Z : \kk^{a+b} \to \kk^{a+b}$ is the endomorphism defined by $f_i \mapsto f_{a + b + 1 -i}$, then $Z^\la = w_Z \cdot Z_{\la^{\vee_Z}}$ with $\mu = \la^{\vee_Z}$ defined by $\mu_i = b - \la_{a + 1 -i}$ for all $i \in [1,a]$. 

Now define Bott-Samelson resolutions in $Z$. Define $W_{(i,j)} = F_{a + j - i}$ for all $(i,j) \not\in \la$ where $F_i$ is the zero space for $i \leq 0$ and $F_i = \kk^{a+b} = F_{a+b}$ for $i \geq a+b$. 
Define $A_\la = \prod_{(i,j) \in \la} \Gr(a + j - i , a+b)$. Set
$$\Zt_\la = \{ (W_{(i,j)})_{(i,j) \in \la} \in A_\la \ | \ W_{(i+1,j)} \subset W_{(i,j)} \subset W_{(i,j+1)} \textrm{ for all $(i,j) \in \lambda$} \}.$$
The projection $\pi_{\la}^Z : \Zt_\la \to Z$ defined by $\pi_{\la}^Z((W_{(i,j)})_{(i,j) \in \la}) = W_{1,1}$ induces a birational morphism onto $Z_\la$.

Embed $Z$ in $X$ with image $X_{(b^a)}$ as follows. Let $u : \kk^{a+b} \to \kk^n$ be the linear map defined by $u(f_i) = e_{k - a + i}$ for all $i \in [1,a+b]$. Note that $u(\kk^{a+b}) = E_{[k - a + 1 , k + b]}$. Denote by $v : Z \to X$ the closed embedding defined by $W_a \mapsto E_{k-a} \oplus u(W_a)$.

Embed $Z$ in $X$ with image $X^{(b^a)^\vee}$ as follows. Let $u' : \kk^{a+b} \to \kk^n$ be the linear map defined by $u'(f_i) = e_{n - k - b + i}$ for all $i \in [1,a+b]$. Note that $u'(\kk^{a+b}) = E_{[n - k - b + 1 , n - k + a]}$. Denote by $v' : Z \to X$ the closed embedding defined by $W_a \mapsto E^{k-a} \oplus u'(W_a)$.

\subsubsection{Intersection with Schubert varieties}

In this subsection we consider the classes of closed subvarieties $Y \subset X$ in Chow groups or in $K$-theory. To avoid introducing more notation we denote both theses classes by $[Y]$ and specify in which theory we are working. The product with the class $[X_{b^a}]$ in Chow groups or for $K$-theory is easy to compute. 

\begin{lemma}
Let $\la \in \P(k,n)$. We have
$$v(Z) \cap X^\la = X_{b^a} \cap X^\la =  
\left\{\begin{array}{ll}
\emptyset & \textrm{for $\la \not \leq b^a$}, \\
v(Z^\la) & \textrm{for $\la \leq b^a$}.\\
\end{array}\right.$$
\end{lemma}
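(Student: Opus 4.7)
The plan is to first identify $v(Z)$ with $X_{b^a}$ (so the first equality in the statement becomes immediate), and then to analyze $v(Z) \cap X^\la$ by case analysis on whether $\la \le b^a$, using the explicit Bruhat-cell descriptions from Section \ref{subsec:bruhat}. The identification $v(Z) = X_{b^a}$ follows from matching inclusions: $v(W_a) = E_{k-a} \oplus u(W_a)$ always sits between $E_{k-a}$ and $E_{k+b}$, and conversely any $V_k \in X_{b^a}$ splits as $E_{k-a} \oplus (V_k \cap E_{[k-a+1,k+b]})$, with the second summand equal to $u(W_a)$ for a unique $a$-dimensional $W_a \subset \kk^{a+b}$ since $u$ is injective with image $E_{[k-a+1,k+b]}$.

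For the case $\la \not\le b^a$, I would produce a single violated inequality in the definition of $X^\la$. Since $\la$ is non-increasing, $\la \not\le b^a$ forces either $\la_1 > b$ or $\la_{a+1} \ge 1$. In the first case, $E^{1+n-k-\la_1}$ is supported on indices $\ge k+\la_1 > k+b$, so meets $V_k \subset E_{k+b}$ trivially, violating the $i=1$ condition. In the second case, $E^{a+1+n-k-\la_{a+1}}$ is supported on indices $\ge k-a+\la_{a+1} > k-a$, hence meets $E_{k-a} \subset V_k$ trivially; as $V_k$ has dimension $k$ and already contains $E_{k-a}$, the subspace $V_k \cap E^{a+1+n-k-\la_{a+1}}$ has dimension at most $a$, contradicting the required $\ge a+1$.

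For $\la \le b^a$, I would take $V_k = v(W_a)$ and translate each defining condition of $X^\la$ through the decomposition $V_k = E_{k-a} \oplus u(W_a)$. The conditions with $i > a$ are automatic because $\la_i = 0$ and $V_k$ visibly contains both $E_{[k-i+1,k-a]}$ and $u(W_a) \subset E_{[k-a+1,k+b]}$, together contributing $i$ dimensions inside $E^{i+n-k}$. For $i \le a$, the $E_{k-a}$ summand contributes nothing to $V_k \cap E^{i+n-k-\la_i}$ (again by an index comparison, since $k-i+\la_i+1 > k-a$), so the intersection reduces to $u(W_a) \cap E_{[k-i+\la_i+1, k+b]}$; pulling back through the injective map $u$ and matching index intervals identifies this with $u(W_a \cap F^{i+b-\la_i})$, of the same dimension as $W_a \cap F^{i+b-\la_i}$. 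Hence the $i$-th defining condition for $v(W_a) \in X^\la$ coincides with the $i$-th defining condition for $W_a \in Z^\la$, giving $v(Z) \cap X^\la = v(Z^\la)$.

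The genuine work is entirely bookkeeping on index intervals, and the only subtle point is the shift $u(f_j) = e_{k-a+j}$ relating flag indices in $\kk^{a+b}$ to those in $\kk^n$; I would not expect any substantive obstacle once that translation dictionary is recorded, and in particular care is needed to verify that the conditions with $i > a$ impose no new constraints rather than producing extra ones.
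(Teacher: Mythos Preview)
Your proof is correct and a bit more self-contained than the paper's. The paper argues as follows: for $\la \not\le b^a$ it simply cites the well-known fact that $X_\mu \cap X^\la = \emptyset$ unless $\la \le \mu$; for $\la \le b^a$ it invokes the general theory of Richardson varieties to know that $X_{b^a} \cap X^\la$ is reduced, irreducible and of dimension $|b^a| - |\la|$, and then only checks the \emph{one} inclusion $v(Z^\la) \subset X_{b^a} \cap X^\la$ (the computation being essentially your index-interval bookkeeping for $i \le a$), concluding by the dimension count. You instead establish the equality directly by showing that the $i$-th incidence condition for $v(W_a) \in X^\la$ is equivalent to the $i$-th condition for $W_a \in Z^\la$ when $i \le a$ and vacuous when $i > a$, and you handle the empty case by exhibiting an explicit violated condition. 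Your route avoids appealing to the structure theory of Richardson varieties at the cost of a little more explicit index tracking; the paper's route is shorter once one is willing to quote that theory. One small caveat: your argument yields equality on closed points; if a scheme-theoretic equality is intended (as the later applications to cobordism suggest), you should still note that the Richardson variety $X_{b^a} \cap X^\la$ is reduced, which is exactly the extra input the paper uses.
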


\begin{proof}
Let $\mu = b^a$. As is well known, the intersection $X_{\mu} \cap X^\la$ is non empty if and only if $\la \leq \mu$. Assume this holds we also know that $X_{\mu} \cap X^\la$ is a Richardson variety thus reduced, irreducible of dimension $| \mu | - | \la |$. Since $Z^\la$ has dimension $| \mu | - | \la |$ it is enough to prove the inclusion $v(Z^\la) \subset X_{b^a} \cap X^\la$. By construction, we have $v(Z) = X_{b^a}$ thus $v(Z^\la) \subset X_{b^a}$. We prove the inclusion $v(Z^\la) \in X^\la$. Recall the definition
$$X^{\la} = \{ V_k \in X \ | \ \dim(V_k \cap E^{i + n - k - \lambda_{i}}) \geq i \textrm{ for all $i \in [1,k]$}  \}.$$
Since $\lambda$ is contained in the $a \times b$ rectangle we have $\ell(\la) \leq a$ thus the conditions $\dim(V_k \cap E^{i + n - k - \lambda_{i}}) \geq i$ for $i>a$ become $\dim(V_k \cap E^{i + n - k}) \geq i$ and are trivially satisfied. We need to check the conditions  $\dim(V_k \cap E^{i + n - k - \lambda_{i}}) \geq i$ for $i \in [1,a]$ and $V_k = v(W_a)$ with $W_a \in Z^\la$. For all $i \in [1,a]$, we have $\dim(V_a \cap F^{i + b - \lambda_{i}}) \geq i$. Applying $v$ we get the inequality $ \dim(v(V_a) \cap v(F^{i + b - \lambda_{i}}) \cap E_{[k-a+1,k+b]}) \geq i$. But $v(F^{i + b - \lambda_{i}} \cap E_{[k-a+1,k+b]}) = E_{[k + 1 - \la_i -i , k + b]} \subset E_{[k + 1 - \la_i -i , n]}  = E^{i + n - k - \lambda_{i}}$. In particular $\dim(v(V_a) \cap E^{i + n - k - \lambda_{i}}) \geq i$ for $i \in [1,a]$ proving the result.
\end{proof}

Remark that $v(w_Z(F^i)) = E_{k-a} \oplus u(F_i) = E_i$, thus for $\la \in \P(a,a+b)$, we have $v(Z_\la) = X_\la$. 
In particular, we have $v(w_Z \cdot Z^\la) = v(Z_{\la^{\vee_Z}}) = X_{\la^{\vee_Z}}$. Consider $\kk^{a+b}$ as a subspace of $\kk^n$ via the embedding $u$ and let $w^Z$ be the endomorphism of $\kk^n$ obtained by extending $w_Z$ with the identity on the complement $\scal{e_i \ | \ i \not\in [k-a,k+b]}$. We have $w^Z \circ v = v \circ w_Z$.

\begin{corollary}
\label{cor:inter}
Let $\la \in \P(a,a+b)$. We have
$$v(Z) \cap X^\la = X_{b^a} \cap X^\la =  
\left\{\begin{array}{ll}
\emptyset & \textrm{for $\la \not \leq b^a$}, \\
w^Z \cdot X_{\la^{\vee_Z}} & \textrm{for $\la \leq b^a$}.\\
\end{array}\right.$$
\end{corollary}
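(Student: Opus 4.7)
The plan is to derive this corollary directly from the preceding lemma, by rewriting $v(Z^\la)$ using the three compatibility facts assembled in the paragraph between the lemma and the corollary.

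First I would simply quote the lemma: it already gives both cases, showing that $v(Z)\cap X^\la = X_{b^a}\cap X^\la$ is empty when $\la\not\leq b^a$ and equals $v(Z^\la)$ when $\la\leq b^a$. So the emptiness case requires nothing new, and the entire content of the corollary is to identify $v(Z^\la)$ with $w^Z\cdot X_{\la^{\vee_Z}}$.

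For this identification I would chain together the three facts already recorded. The definition of $\la^{\vee_Z}$ (together with the action of $w_Z$ on $Z$) gives $Z^\la = w_Z\cdot Z_{\la^{\vee_Z}}$. The remark preceding the corollary shows $v(Z_\mu)=X_\mu$ for every $\mu\in\P(a,a+b)$; since $\la^{\vee_Z}\in\P(a,a+b)$ whenever $\la\in\P(a,a+b)$, this applies to $\mu=\la^{\vee_Z}$. Finally, $w^Z$ has been defined precisely so that $w^Z\circ v = v\circ w_Z$. Putting these together,
$$v(Z^\la) \;=\; v(w_Z\cdot Z_{\la^{\vee_Z}}) \;=\; w^Z\cdot v(Z_{\la^{\vee_Z}}) \;=\; w^Z\cdot X_{\la^{\vee_Z}},$$
which is the desired formula.

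There is essentially no obstacle here: the lemma has done the geometric work of identifying the intersection with $v(Z^\la)$, and the remark has done the bookkeeping to pass between $Z$-Schubert varieties and $X$-Schubert varieties via $v$ and $w^Z$. The only mild care needed is to check that $\la^{\vee_Z}$ indeed lies in $\P(a,a+b)$ (which is immediate from the definition $\mu_i = b - \la_{a+1-i}$) so that the remark applies, and that the equality $w^Z\circ v = v\circ w_Z$ commutes with taking closures of orbits (which it does, since $w^Z$ acts by a linear automorphism on $X$). No further computation is required.
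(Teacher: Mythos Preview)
Your proposal is correct and follows essentially the same approach as the paper: the paper states the corollary without a separate proof, relying on the remark paragraph immediately preceding it, which records precisely the three facts ($Z^\la = w_Z\cdot Z_{\la^{\vee_Z}}$, $v(Z_\mu)=X_\mu$, and $w^Z\circ v = v\circ w_Z$) that you chain together to convert $v(Z^\la)$ into $w^Z\cdot X_{\la^{\vee_Z}}$.
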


\begin{corollary}
\label{cor:inter-dual}
Let $\la \in \P(a,a+b)$. We have
$$X_\la \cap v'(Z) = X_\la \cap X^{{b^a}^\vee} =  
\left\{\begin{array}{ll}
\emptyset & \textrm{for $\la \not \geq (b^a)^\vee$}, \\
w_X w^Z \cdot X_{(\la^\vee)^{\vee_Z}} & \textrm{for $\la \geq (b^a)^\vee$}.\\
\end{array}\right.$$
\end{corollary}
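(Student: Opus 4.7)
The plan is to deduce this statement directly from Corollary \ref{cor:inter} by transporting both sides through the involution $w_X$ of $X$. Recall from Subsection \ref{subsec:bruhat} that $\Xo^{\mu^\vee} = w_X \cdot \Xo_\mu$, hence $X^{\mu^\vee} = w_X \cdot X_\mu$ and, since $w_X$ is an involution, also $X_\mu = w_X \cdot X^{\mu^\vee}$.

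First I would verify the equality $v'(Z) = X^{(b^a)^\vee}$. This is a direct unwinding of the definitions: from Subsection \ref{subsec:smooth} one has $X^{(b^a)^\vee} = \{V_k \in X \mid E^{k-a} \subset V_k \subset E^{k+b}\}$, and the explicit description of $v'$ together with the identity $u'(\kk^{a+b}) = E_{[n-k-b+1,\,n-k+a]} \subset E^{k+b}$ shows that $v'(Z)$ lies in this locus. Since $v'$ is a closed embedding and both varieties are isomorphic to $\Gr(a,a+b)$, equality follows.

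Next, since $w_X$ acts on $X$ by a bijection it preserves intersections, so
\[
w_X \cdot (X_\la \cap X^{(b^a)^\vee}) = (w_X \cdot X_\la) \cap (w_X \cdot X^{(b^a)^\vee}) = X^{\la^\vee} \cap X_{b^a},
\]
using $w_X \cdot X_\la = X^{\la^\vee}$ and $w_X \cdot X^{(b^a)^\vee} = X_{b^a}$. Now I would apply Corollary \ref{cor:inter} (or the preceding Lemma when the intersection is empty) to the right-hand side, substituting $\la^\vee$ in place of $\la$. A short check from the definitions $\la^\vee_i = n-k-\la_{k+1-i}$ and $(b^a)^\vee_i = n-k$ for $i\le k-a$, $(b^a)^\vee_i = n-k-b$ for $i>k-a$, gives the equivalence $\la^\vee \leq b^a \Leftrightarrow \la \geq (b^a)^\vee$, and in this case $\la^\vee$ automatically fits in the $a\times b$ rectangle so Corollary \ref{cor:inter} applies and yields $X^{\la^\vee} \cap X_{b^a} = w^Z \cdot X_{(\la^\vee)^{\vee_Z}}$; otherwise the intersection is empty.

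Applying $w_X$ once more and using $w_X^2 = \id$ then gives
\[
X_\la \cap X^{(b^a)^\vee} = w_X \cdot (w^Z \cdot X_{(\la^\vee)^{\vee_Z}}) = w_X w^Z \cdot X_{(\la^\vee)^{\vee_Z}}
\]
in the non-empty case, and the empty case is preserved by $w_X$. There is no serious obstacle here; the only mildly technical point is the combinatorial equivalence $\la \geq (b^a)^\vee \Leftrightarrow \la^\vee \leq b^a$ and the observation that this condition forces $\la^\vee \in \P(a,a+b)$ so that $(\la^\vee)^{\vee_Z}$ is defined.
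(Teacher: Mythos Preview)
Your proposal is correct and follows essentially the same approach as the paper: the paper's proof is the one-line ``Set $\mu = \la^\vee$, apply Corollary \ref{cor:inter} to $\mu$ and multiply with $w_X$,'' and you have simply unpacked this, supplying the verification that $v'(Z) = X^{(b^a)^\vee}$, the combinatorial equivalence $\la^\vee \leq b^a \Leftrightarrow \la \geq (b^a)^\vee$, and the observation that $\la^\vee$ then lies in $\P(a,a+b)$ so that Corollary \ref{cor:inter} applies.
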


\begin{proof}
Set $\mu = \la^\vee$, apply Corollary \ref{cor:inter} to $\mu$ and multiply with $w_X$.
\end{proof}

\begin{corollary}
\label{coro:coho}
Let $\la \in \P(a,a+b)$. In $CH^*(X)$, we have
$$[X_\la] \cup [X_{{b^a}}] =  
\left\{\begin{array}{ll}
[X_{(\la^\vee)^{\vee_Z}}] & \textrm{for $\la \geq (b^a)^\vee$},\\
0 & \textrm{for $\la \not \geq (b^a)^\vee$}. \\
\end{array}\right.$$
\end{corollary}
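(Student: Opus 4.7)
The plan is to reduce the statement to Corollary \ref{cor:inter-dual} by a passage from set-theoretic intersection to cycle classes in $CH^*(X)$.

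First, I would replace $[X_{b^a}]$ by the cohomologically equivalent class $[X^{(b^a)^\vee}]$. Since $X^{(b^a)^\vee} = w_X \cdot X_{b^a}$ and $w_X$ lies in the connected group $\GL_n(\kk)$, translation is trivial on $CH^*(X)$, so $[X_{b^a}] = [X^{(b^a)^\vee}]$. The same remark, applied to the element $w_X w^Z \in \GL_n(\kk)$, gives $[w_X w^Z \cdot X_{(\la^\vee)^{\vee_Z}}] = [X_{(\la^\vee)^{\vee_Z}}]$, which matches the right-hand side of the claimed formula.

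Next, the key geometric input is that the intersection of opposite Schubert varieties $X_\la \cap X^{(b^a)^\vee}$ is a Richardson variety: it is empty when $\la \not \geq (b^a)^\vee$, and otherwise reduced, irreducible, of the expected dimension $|\la| - |(b^a)^\vee|$. Consequently, by Kleiman's transversality theorem (applied, if needed, to a generic translate in the connected group $\GL_n(\kk)$), the set-theoretic intersection represents the cup product:
$$[X_\la] \cup [X^{(b^a)^\vee}] = [X_\la \cap X^{(b^a)^\vee}].$$
Plugging in Corollary \ref{cor:inter-dual} then yields both cases of the formula at once: for $\la \geq (b^a)^\vee$ the right-hand side equals $[w_X w^Z \cdot X_{(\la^\vee)^{\vee_Z}}] = [X_{(\la^\vee)^{\vee_Z}}]$, and for $\la \not \geq (b^a)^\vee$ it is zero.

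The only step that goes beyond purely formal manipulation is the identification of the set-theoretic intersection with the class of the cup product, which is the main obstacle; everything else is already contained in Corollary \ref{cor:inter-dual}. Since the Richardson variety has the expected dimension and is reduced, this identification is standard, and can alternatively be obtained from Poincar\'e duality of the Schubert bases $([X_\la])$ and $([X^\la])$ in $CH^*(X)$ noted in the remark following the Bruhat decomposition.
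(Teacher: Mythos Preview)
Your proposal is correct and is precisely the argument the paper leaves implicit: the corollary is stated without proof immediately after Corollary~\ref{cor:inter-dual}, and your reduction via $[X_{b^a}]=[X^{(b^a)^\vee}]$, transversality of opposite Schubert varieties, and the connectedness of $\GL_n(\kk)$ to kill the $w_Xw^Z$-translate is exactly the intended deduction. There is nothing to add.
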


\begin{remark}
The same result holds for $K$-theory, see \cite{Bu}.
\end{remark}

Our aim is to generalise the above results to Bott-Samelson resolutions and  to cobordism. For this, the dual point of view of Corollary \ref{cor:inter-dual} is better suited.

\subsubsection{Fiber product}

Let $\mu$ be a partition in the $a \times b$ rectangle and let $\mu' = (\mu^{\vee_Z})^\vee$. We construct an embedding of $\Zt_\mu \to \Xt_{\mu'}$. We denote by $v' : \Gr(i,a+b) \to \Gr(i+k-a,n)$ the embeddings induced by $u'$ as follows: $v'(W_i) = u'(W_i) \oplus E^{k-a}$.

First remark that $\mu \leq \mu'$ and that we get $\mu'$ from $\mu$ by adding $k - a$ lines (with $n-k$ boxes) and $n - k - b$ columns (with $k$ boxes). In other words $\mu'_i = n-k$ for $i \in [1,k-a]$ and $\mu'_i = \mu_i + n - k - b$ for $i \in [k - a +1 , k]$.

Let $(W_{(i,j)})_{(i,j) \in \mu} \in \Zt_\mu$. We define $(V_{(i,j)})_{(i,j) \in \mu'}$ as follows.

\noindent
For $i \in [1,k-a]$ and $j \in [1,n-k-b]$, set
$$V_{(i,j)} = (v'(W_{(1,1)}) \oplus E_{j-1}) \cap E_{n +1 -i}$$
For $i \in [k-a+1,k]$ and $j \in [1,n-k-b]$
$$V_{(i,j)} =  (v'(W_{(i -(k-a),1)}) \oplus E_{j-1}) \cap E_{n + a - k}$$
For $i \in [1,k-a]$ and $j \in [n-k-b+1,n-k]$
$$V_{(i,j)} =  (v'(W_{(1,j-(n-k-b))}) \oplus E_{n-k-b}) \cap E_{n +1 -i}$$
For $i \in [1,k-a]$ and $j \in [1,n-k-b]$
$$V_{(i,j)} =  (v'(W_{(i -(k-a),j-(n-k-b))}) \oplus E_{n-k-b}) \cap E_{n +a -k}.$$
For $(i,j) \not\in \mu'$ we set $V_{(i,j)} = (v'(W_{(i -(k-a),j-(n-k-b))}) \oplus E_{n-k-b}) \cap E_{n +a -k}$.

\begin{lemma}
We have $(V_{(i,j)})_{(i,j) \in {\mu'}} \in \Xt_{\mu'}$.
\end{lemma}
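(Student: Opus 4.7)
The plan is to perform a case analysis across the four quadrants of $\mu'$. The diagram of $\mu'$ is obtained from that of $\mu$ by prepending $k-a$ rows of length $n-k$ and $n-k-b$ columns of height $k$, and the four formulas defining $V_{(i,j)}$ correspond to the four quadrants cut out by the row-separator at $i = k-a$ and the column-separator at $j = n-k-b$. With the shortcuts $i^* := \max(1, i-(k-a))$, $j^* := \max(1, j-(n-k-b))$, $m(j) := \min(j-1, n-k-b)$, and $M(i) := \min(n+1-i, n+a-k)$, all four formulas read uniformly as $V_{(i,j)} = (v'(W_{(i^*,j^*)}) \oplus E_{m(j)}) \cap E_{M(i)}$.

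First, I would verify the dimension condition $V_{(i,j)} \in \Gr(k+j-i, n)$ in each quadrant. The key input is the coordinate-support picture: $u'(\kk^{a+b})$ lies in $E_{[n-k-b+1,\, n-k+a]}$, disjoint from $E_{n-k-b}$ below and from $E^{k-a} = E_{[n-k+a+1,\, n]}$ above. When $M(i) = n+a-k$ (bottom quadrants), intersecting with $E_{n+a-k}$ kills $E^{k-a}$ entirely, leaving $V_{(i,j)} = u'(W_{(i^*,j^*)}) \oplus E_{m(j)}$. When $M(i) = n+1-i$ (top quadrants), intersecting with $E_{n+1-i}$ preserves $E_{[n-k+a+1,\, n+1-i]}$, giving $V_{(i,j)} = u'(W_{(i^*,j^*)}) \oplus E_{m(j)} \oplus E_{[n-k+a+1,\, n+1-i]}$. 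In all four cases the summands are coordinate-disjoint, and the dimensions add up to $k+j-i$.

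Next, I would verify the nesting $V_{(i+1,j)} \subset V_{(i,j)} \subset V_{(i,j+1)}$ for $(i,j) \in \mu'$. Inside a single quadrant, these follow from $W_{(i^*+1,j^*)} \subset W_{(i^*,j^*)} \subset W_{(i^*,j^*+1)}$ in $\Zt_\mu$ combined with $E_{j-1} \subset E_j$ and $E_{n-i} \subset E_{n+1-i}$. At the quadrant boundaries (where $i$ crosses from $k-a$ to $k-a+1$ or $j$ from $n-k-b$ to $n-k-b+1$), the index clamping makes the two adjacent formulas share the same $W$-input, reducing the inclusion to one of the trivial steps $E_{n+a-k} \subset E_{n-k+a+1}$ or $E_{n-k-b-1} \subset E_{n-k-b}$.

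Finally, I would handle the boundary cases where $(i+1,j)$ or $(i,j+1)$ lies outside $\mu'$, so that the default value $E_{k+j-i \mp 1}$ is used. For the bottom edge, the hypothesis $(i+1,j) \notin \mu'$ forces $(i^*+1, j^*) \notin \mu$, so $W_{(i^*+1, j^*)} = F_{a+j^*-i^*-1}$; together with the nesting in $\Zt_\mu$ this yields $F_{a+j^*-i^*-1} \subset W_{(i^*, j^*)}$, and applying $u'$ and adjoining $E_{m(j)}$ produces the required $E_{k+j-i-1} \subset V_{(i,j)}$. For the right edge, $W_{(i^*, j^*)} \subset W_{(i^*, j^*+1)} = F_{a+j^*+1-i^*}$ gives $u'(W_{(i^*, j^*)}) \subset E_{[n-k-b+1,\, n-k-b+a+j^*+1-i^*]} \subset E_{k+j-i+1}$, which yields $V_{(i,j)} \subset E_{k+j-i+1}$. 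The main obstacle is bookkeeping: each step is elementary, but the case analysis is branchy and the many index shifts must be tracked with care.
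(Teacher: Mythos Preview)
Your proposal is correct and follows essentially the same approach as the paper: verify that the sums are direct and the intersections transverse (giving the correct dimensions), then check the nesting conditions case by case. The paper compresses your boundary analysis into the single observation that the extended formula $V_{(i,j)} = (v'(W_{(i-(k-a),j-(n-k-b))}) \oplus E_{n-k-b}) \cap E_{n+a-k}$ equals $E_{k+j-i}$ for $(i,j) \notin \mu'$, after which the inclusions are declared ``an easy check''; your explicit quadrant-by-quadrant bookkeeping simply unpacks what the paper leaves implicit.
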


\begin{proof}
Recall that $u'(\kk^{a+b}) = E_{n-k-b,n-k+a}$, that $E^{k-a} \subset v'(W)$ and that $v'(W) \subset E^{k+b}$ for any subspace $W \subset \kk^{a+b}$. In particular, in the above definition all sums are direct and all intersections are transverse. This implies $\dim V_{(i,j)} = k + j -i$ thus $(V_{(i,j)})_{(i,j) \in {\la'}} \in Y_{\mu'}$. For $(i,j) \not\in \mu'$ we have $V_{(i,j)} = (v'(W_{(i -(k-a),j-(n-k-b))}) \oplus E_{n-k-b}) \cap E_{n + a -k} = E_{k+j-i}$.
An easy check proves $V_{(i+1,j)} \subset V_{(i,j)}  \subset V_{(i,j+1)}$. The result follows.
\end{proof}

\begin{lemma}
The map $\varphi : \Zt_\mu \to \Xt_{\mu'}$ is a closed embedding.
\end{lemma}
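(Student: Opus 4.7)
The plan is to produce an explicit left inverse $\psi$ to $\varphi$ on an open neighborhood $U$ of $\varphi(\Zt_\mu)$ in $\Xt_{\mu'}$, and then to invoke properness of $\Zt_\mu$.

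The key observation is that for $(p,q) \in \mu$ and $(i,j) = (p+k-a,\, q+n-k-b)$, which lies in the ``south-east'' block $[k-a+1,k] \times [n-k-b+1,n-k]$ of $\mu'$, the fourth defining formula for $V_{(i,j)}$ simplifies. Using the decomposition $\kk^n = E_{n-k-b} \oplus u'(\kk^{a+b}) \oplus E^{k-a}$, together with $E^{k-a} \cap E_{n+a-k} = 0$ and $u'(W_{(p,q)}) \oplus E_{n-k-b} \subset E_{n+a-k}$, one gets
$$V_{(i,j)} = (v'(W_{(p,q)}) \oplus E_{n-k-b}) \cap E_{n+a-k} = u'(W_{(p,q)}) \oplus E_{n-k-b}.$$
Intersecting further with $u'(\kk^{a+b}) = E_{[n-k-b+1,\,n-k+a]}$, which contains $u'(W_{(p,q)})$ and meets $E_{n-k-b}$ only at $0$, recovers $u'(W_{(p,q)})$, hence $W_{(p,q)}$ by injectivity of $u'$.

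This motivates defining $\psi$ on the open locus $U \subset \Xt_{\mu'}$ where $V_{(p+k-a,\,q+n-k-b)} \cap u'(\kk^{a+b})$ has the expected dimension $a + q - p$ for every $(p,q) \in \mu$, by
$$\psi\bigl((V_{(i,j)})\bigr)_{(p,q)} = u'^{-1}\bigl(V_{(p+k-a,\,q+n-k-b)} \cap u'(\kk^{a+b})\bigr).$$
The resulting tuple lies in $\Zt_\mu$: the flag inclusions $W_{(p+1,q)} \subset W_{(p,q)} \subset W_{(p,q+1)}$ are inherited from those among the $V_{(i,j)}$, and the boundary values at the rim of $\mu$ match the required fixed subspaces $F_{a+q-p}$ after substituting the default $V_{(i',j')} = E_{k+j'-i'}$ for $(i',j') \notin \mu'$. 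By the computation above, $\varphi(\Zt_\mu) \subset U$ and $\psi \circ \varphi = \id_{\Zt_\mu}$.

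Hence $\varphi : \Zt_\mu \to U$ is a section of the morphism $\psi$, and since $\psi$ is separated (as any morphism of varieties is), $\varphi$ is automatically a closed immersion into $U$. Moreover, $\Zt_\mu$ is projective (being closed in the product of Grassmannians $A_\mu$), so $\varphi$ is proper and $\varphi(\Zt_\mu)$ is closed in $\Xt_{\mu'}$. Combining the two statements gives that $\varphi : \Zt_\mu \to \Xt_{\mu'}$ is a closed immersion. The main obstacle is the bookkeeping needed to check that $\psi$ actually maps $U$ into $\Zt_\mu$, i.e., the boundary inclusions at the rim of $\mu$; this reduces to the explicit block decomposition of $\kk^n$ described above and is otherwise routine.
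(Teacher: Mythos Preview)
Your proof is correct and follows essentially the same approach as the paper's: the paper's one-line argument is simply ``$u'(W_{(i,j)}) = V_{(i+k-a,\,j+n-k-b)} \cap E^{k+b}$; since $u'$ is injective, the result follows.'' This is exactly your recovery formula (with $E^{k+b}$ in place of your $u'(\kk^{a+b})$, which works equally well), so the core idea is identical; you have just made explicit the open locus $U$, the separated-section argument, and the appeal to properness that the paper leaves to the reader.
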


\begin{proof}
We have $u'(W_{(i,j)}) = V_{(i + k-a,j+n-k-b)}) \cap E^{k+b}$. Since $u$ is injective, the result follows.
\end{proof}

\begin{lemma}
The map $\psi: \Zt_\mu \to X$ defined by $(W_{(i,j)})_{(i,j) \in \mu} \mapsto V_{(1,1)}$ factors through $v'(Z)$.
\end{lemma}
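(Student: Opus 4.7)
The plan is to evaluate the defining formula for $V_{(1,1)}$ directly and observe that it collapses to $v'(W_{(1,1)})$, which lies in $v'(Z)$ by construction. This will show that $\psi = v' \circ \pi^Z_\mu|_{(1,1)\text{-component}}$ on the level of points, hence factors through the closed subvariety $v'(Z) \subset X$.

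First I would identify which of the four cases in the definition of $V_{(i,j)}$ applies at $(i,j) = (1,1)$. Assuming generic dimensions $k - a \geq 1$ and $n - k - b \geq 1$, the point $(1,1)$ belongs to the range $i \in [1,k-a]$, $j \in [1,n-k-b]$, so
\[
V_{(1,1)} = (v'(W_{(1,1)}) \oplus E_0) \cap E_n = v'(W_{(1,1)}),
\]
since $E_0 = 0$ and $E_n = \kk^n$. Next I would handle the two edge cases: if $k = a$ then the first index range is empty and we use the second formula, which at $(1,1)$ gives $(v'(W_{(1,1)}) \oplus E_0) \cap E_{n+a-k} = v'(W_{(1,1)}) \cap \kk^n = v'(W_{(1,1)})$; similarly if $n - k = b$ then the third formula applies and again yields $v'(W_{(1,1)})$. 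In every case the value is the same.

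Finally, since $(1,1) \in \mu$ corresponds to the factor $\Gr(a, a+b) = Z$ in the product $A_\mu$, we have $W_{(1,1)} \in Z$, and by definition of $v'$ the subspace $v'(W_{(1,1)}) = E^{k-a} \oplus u'(W_{(1,1)})$ satisfies $E^{k-a} \subset v'(W_{(1,1)}) \subset E^{k+b}$, i.e.\ $v'(W_{(1,1)}) \in v'(Z) = X^{(b^a)^\vee}$. Hence $\psi((W_{(i,j)})_{(i,j) \in \mu}) = V_{(1,1)} = v'(W_{(1,1)}) \in v'(Z)$, so $\psi$ factors as $v' \circ \bar\psi$ where $\bar\psi : \Zt_\mu \to Z$ sends $(W_{(i,j)})$ to $W_{(1,1)}$, i.e.\ $\bar\psi = \pi^Z_\mu$.

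There is no substantive obstacle here: the entire content is a case-by-case unwinding of the explicit formulas defining the embedding $\varphi$. The only mild care needed is to verify that the boundary cases $k = a$ or $n - k = b$, in which some of the index ranges degenerate, still produce the same answer $v'(W_{(1,1)})$ for $V_{(1,1)}$.
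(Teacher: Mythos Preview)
Your proof is correct and follows essentially the same approach as the paper. The paper's argument is the one-line computation $V_{(1,1)} = v'(W_{(1,1)}) = u'(W_{(1,1)}) \oplus E^{k-a}$, whence $E^{k-a} \subset V_{(1,1)} \subset E^{k+b}$; you carry out the same computation but with more explicit attention to which of the four defining formulas for $V_{(i,j)}$ applies at $(1,1)$ and to the degenerate cases $k=a$ or $n-k=b$.
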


\begin{proof}
We have $V_{(1,1)} = v'(W_{(1,1)}) = u'(W_{(1,1)}) \oplus E^{k-a}$. In particular $E^{k-a} \subset V_{(1,1)} \subset E^{k+b}$. The result follows.
\end{proof}

\begin{proposition}
Let $\mu \in \P(a,a+b)$ and consider $\Zt_\mu$ as an $X$-scheme via $\psi$. We have $\Xt_{\mu'} \times_X v'(Z) = \Xt_{\mu'} \times_X X^{(b^a)^\vee} \simeq \Zt_\mu$.
\end{proposition}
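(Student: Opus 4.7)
The equality $v'(Z) = X^{(b^a)^\vee}$ is immediate from the descriptions recalled in Subsection~\ref{subsec:smooth}: both varieties parametrise the $k$-dimensional subspaces $V_k \subset \kk^n$ satisfying $E^{k-a} \subset V_k \subset E^{k+b}$. So the two forms of the fiber product coincide, and the plan is to prove the isomorphism with $\Zt_\mu$ by constructing an explicit inverse to the natural map $\Phi := (\varphi, \psi) : \Zt_\mu \to \Xt_{\mu'} \times_X v'(Z)$. First I would observe that $\Phi$ is a closed embedding: the first projection $\Xt_{\mu'} \times_X v'(Z) \to \Xt_{\mu'}$ is the base change of the closed embedding $v'(Z) \hookrightarrow X$ and so is itself closed, and its composition with $\Phi$ recovers $\varphi$, which is already a closed embedding by the previous lemma.

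To construct the inverse $\Psi$, I would send a point $((V_{(i,j)}), V)$ of the fiber product to the tuple $(W_{(i,j)})_{(i,j) \in \mu}$ given by
$$W_{(i,j)} := (u')^{-1}\bigl(V_{(i+k-a,\, j+n-k-b)} \cap E_{[n-k-b+1,\,n-k+a]}\bigr),$$
noting that the intersection lies in $u'(\kk^{a+b}) = E_{[n-k-b+1,\,n-k+a]}$ and that $u'$ is injective. The identity $\Psi \circ \Phi = \mathrm{id}_{\Zt_\mu}$ will then be a direct unravelling of the Case~4 formula in the construction of $\varphi$: one has $V_{(i+k-a,\,j+n-k-b)} = u'(W_{(i,j)}) \oplus E_{n-k-b}$, whose intersection with $E_{[n-k-b+1,\,n-k+a]}$ is exactly $u'(W_{(i,j)})$.

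The technical heart will be to verify $\Phi \circ \Psi = \mathrm{id}$: for any point of $\Xt_{\mu'} \times_X v'(Z)$, every $V_{(i,j)}$ must equal the value prescribed by $\varphi$ applied to the $W_{(i,j)}$'s produced by $\Psi$. In the corner region $i \leq k-a$, $j \leq n-k-b$ (Case~1 of $\varphi$), I would argue by induction on $i$ that $V_{(i,1)} = V_{(1,1)} \cap E_{n+1-i}$: this intersection has dimension exactly $k+1-i$ by a direct dimension count using $E^{k-a} \subset V_{(1,1)} \subset E^{k+b}$, and the chain conditions $V_{(i,1)} \subset V_{(1,1)}$ and $V_{(i,1)} \subset V_{(i, n-k+1)} = E_{n+1-i}$ together with $\dim V_{(i,1)} = k+1-i$ force equality. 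The full Case~1 formula $V_{(i,j)} = (V_{(1,1)} + E_{j-1}) \cap E_{n+1-i}$ then follows by adding the boundary term $E_{j-1} \subset V_{(i,j)}$. Analogous arguments in the two side strips (Cases~2 and~3) pin down $V_{(i,j)}$ using the adjacent Case~4 values recovered by $\Psi$, and the chain conditions $V_{(i+1,j)} \subset V_{(i,j)} \subset V_{(i,j+1)}$ in $\Xt_{\mu'}$ will translate exactly to $W_{(i+1,j)} \subset W_{(i,j)} \subset W_{(i,j+1)}$ in $\Zt_\mu$, so that $(W_{(i,j)}) \in \Zt_\mu$. The hard part will be the bookkeeping across the four regions --- corner, vertical strip, horizontal strip, interior rectangle --- to check that every $V_{(i,j)}$ is rigidly determined by $V_{(1,1)}$ together with the $W_{(i,j)}$; once this uniqueness is in hand, $\Phi$ and $\Psi$ are mutually inverse and the proposition follows.
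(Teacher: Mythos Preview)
Your approach is correct in spirit but genuinely different from the paper's, and considerably more laborious. Both proofs begin the same way: form $\Phi = (\varphi,\psi)$ and note it is a closed embedding because $\varphi$ is. From there the paper does \emph{not} build an explicit inverse. Instead it argues that the target $\Xt_{\mu'} \times_X v'(Z)$ is itself smooth, irreducible, and of dimension $|\mu|$, so a closed embedding from the smooth irreducible $\Zt_\mu$ of the same dimension is forced to be an isomorphism. The key input is transversality: since $v'(Z)=X^{(b^a)^\vee}$ is an opposite Schubert variety and $\pi_{\mu'}:\Xt_{\mu'}\to X$ is birational onto a Schubert variety, Kleiman--Bertini gives that every component of the fiber product has dimension $|\mu'| - \operatorname{codim}_X v'(Z) = |\mu|$, that the fiber product is smooth, and (using irreducibility of the Richardson variety $v'(Z)\cap X_{\mu'}$) that it is irreducible.

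Your route---recovering $W_{(i,j)}$ from the interior block and then proving each $V_{(i,j)}$ is rigidly pinned down by the chain conditions plus the boundary values $E_{k+j-i}$---does work, but the sketch hides real content. In particular you must first check that the candidate $W_{(i,j)} := (u')^{-1}(V_{(i+k-a,\,j+n-k-b)}\cap u'(\kk^{a+b}))$ has the correct dimension $a+j-i$; this is not automatic and requires using the chain to the outer boundary (e.g.\ $E_{j'-1}\subset V_{(i',j')}\subset E_{n+1-i'}$ via the rows $i'=k+1$ and columns $j'=\mu'_{i'}+1$) to see that $V_{(i',j')}$ already decomposes as $E_{n-k-b}\oplus(\text{something in }u'(\kk^{a+b}))$. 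Only then do the four-region formulas follow. What you gain is an argument free of any general-position or Bertini input; what the paper gains is brevity, replacing all the case bookkeeping by a single transversality citation.
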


\begin{proof}
We have morphisms $\varphi : \Zt_\mu \to \Xt_{\mu'}$ and $\psi : \Zt_\mu \to v'(Z)$ with $\varphi$ a closed embedding. Furthermore the map $\pi_{\mu'} : \Xt_{\mu'} \to X$ is given by $(V_{(i,j)})_{(i,j) \in {\mu'}} \mapsto V_{(1,1)}$ so the composition $\pi_{\mu'} \circ \varphi$ is the map $\psi$. In particular we have a morphism $\varphi \times \psi : \Zt_\mu \to \Xt_{\mu'} \times_X v'(Z)$. This is a closed embedding since $\varphi$ is a closed embedding). To prove that this is an isomorphism is it enough to prove that $\Xt_{\mu'} \times_X v'(Z)$ is irreducible and smooth of dimension $| \mu | = \dim \Zt_\mu$. But $v'(Z) = X^{(b^a)^\vee}$ and $\Xt_{\mu'}$ are in general position. By Kleimann-Bertini \cite{bertini} any irreducible component is of dimension $| \mu | - \textrm{codim}_X v'(Z) = | \mu |$. By Bertini again, the fiber product of $v'(Z)$ with the locus in $\Xt_{\mu'}$ where $\pi_{\mu'}$ is not an isomorphism has dimension strictly less than $| \mu |$ and is therefore never an irreducible component. Now since $v'(Z) \cap X_{\mu'}$ is irreducible, the same holds for $\Xt_{\mu'} \times_X v'(Z)$. Furthermore by Bertini again this fiber product is smooth and therefore reduced.
\end{proof}

\begin{corollary}
Let $\la \in \P(k,n)$. As $X$-schemes, we have
$$\Xt_\la \times_X v'(Z) = \Xt_\la \times_X X^{b^a}  \simeq  
\left\{\begin{array}{ll}
\emptyset & \textrm{for $\la \not \geq (b^a)^\vee$}, \\
\Zt_\mu & \textrm{for $\la \geq (b^a)^\vee$},\\
\end{array}\right.$$
with $\mu = (\la^\vee)^{\vee_Z}$ for $\la \geq (b^a)^\vee$ and $\Zt_\mu$ is considered as an $X$-scheme via $\psi$.
\end{corollary}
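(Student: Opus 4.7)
The plan is to reduce this corollary to the preceding proposition by identifying partitions $\la \in \P(k,n)$ with $\la \geq (b^a)^\vee$ as precisely those of the form $\mu'$ for some $\mu \in \P(a,a+b)$, and then checking that under this identification $\mu = (\la^\vee)^{\vee_Z}$. Once that dictionary is in place, for $\la \geq (b^a)^\vee$ the statement becomes literally the previous proposition, and for $\la \not\geq (b^a)^\vee$ it reduces to emptiness of an appropriate Richardson variety.

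For the case $\la \not\geq (b^a)^\vee$, I would argue as follows. The Bott-Samelson morphism $\pi_\la : \Xt_\la \to X$ has image $X_\la$, and by the previous subsection $v'(Z) = X^{(b^a)^\vee}$ is closed in $X$, so the fiber product $\Xt_\la \times_X v'(Z)$ is the closed subscheme $\pi_\la^{-1}(X^{(b^a)^\vee})$ of $\Xt_\la$. Its image in $X$ is contained in $X_\la \cap X^{(b^a)^\vee}$, which as a Richardson variety is non-empty if and only if $(b^a)^\vee \leq \la$. When $\la \not\geq (b^a)^\vee$ this intersection is empty, hence the preimage is empty as well.

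For the case $\la \geq (b^a)^\vee$, I would first unpack the combinatorial claim. A direct computation from the definition of the dual partition gives $(b^a)^\vee_i = n-k$ for $i \in [1,k-a]$ and $(b^a)^\vee_i = n-k-b$ for $i \in [k-a+1,k]$, so $\la \geq (b^a)^\vee$ together with $\la \in \P(k,n)$ forces $\la_i = n-k$ for $i \in [1,k-a]$ and $\la_i \in [n-k-b,n-k]$ for $i \in [k-a+1,k]$. Setting $\mu_i := \la_{k-a+i} - (n-k-b)$ for $i \in [1,a]$ then defines a partition $\mu \in \P(a,a+b)$ which satisfies $\la = \mu'$ in the notation of the preceding proposition. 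A routine check using $\la^\vee_i = n-k - \la_{k+1-i}$ gives $\la^\vee_i = b - \mu_{a+1-i}$ for $i \in [1,a]$ and $\la^\vee_i = 0$ for $i > a$, so $\la^\vee$ lies in the $a \times b$ rectangle and $(\la^\vee)^{\vee_Z} = \mu$. Applying the preceding proposition to $\mu$ yields the identification
$$\Xt_\la \times_X v'(Z) = \Xt_{\mu'} \times_X v'(Z) \simeq \Zt_\mu,$$
as desired.

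I do not anticipate a serious obstacle: the geometric content is entirely contained in the preceding proposition, and this corollary only repackages that result by translating from the indexing by $\mu \in \P(a,a+b)$ of the sub-grassmannian $Z$ into the indexing by $\la \in \P(k,n)$ of the ambient grassmannian $X$, and by extending the answer by zero outside the range $\la \geq (b^a)^\vee$. The only thing to be careful about is the bookkeeping in the combinatorial bijection $\la \leftrightarrow \mu$ and the verification that it agrees with the duality $\mu = (\la^\vee)^{\vee_Z}$.
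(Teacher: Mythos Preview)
Your proposal is correct and follows exactly the approach the paper intends: the paper states this corollary without proof, treating it as an immediate consequence of the preceding proposition together with the combinatorial identification $\la \leftrightarrow \mu$. You have correctly filled in both the emptiness argument for $\la \not\geq (b^a)^\vee$ via the Richardson variety $X_\la \cap X^{(b^a)^\vee}$, and the verification that $\la \geq (b^a)^\vee$ is equivalent to $\la = \mu'$ for a unique $\mu \in \P(a,a+b)$ with $\mu = (\la^\vee)^{\vee_Z}$.
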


\subsubsection{Cobordism}

We construct another $X$-scheme isomorphism between $\Zt_\mu$ and $w_X w^Z \cdot \Xt_\mu$. Here $\Zt_\mu$ is an $X$-scheme via $\psi$ while $w_X w^Z \cdot \Xt_\mu$ is an $X$-scheme via $w_Xw^Z \circ \pi_\mu$. The action of $w_X$ and $w^Z$ on $\Xt_\mu$ being defined via the embedding of $\Xt_\mu$ in $Y_\mu$ and the action on the later is given by the diagonal action on each factor (recall that $Y_\mu$ is a product of grassmannians $\Gr(i,n)$ on which $w_X$ and $w^Z$ act).

Let $(W_{(i,j)})_{(i,j) \in \mu} \in \Zt_\mu$. We define $(V_{(i,j)})_{(i,j) \in \mu}$ as follows. For $(i,j) \in \mu$, set $V_{(i,j)} = v'(W_{(i,j)})$. For $(i,j) \not\in \mu$, set $V_{(i,j)} = w_Xw^Z \cdot E_{k + j - i}$

\begin{lemma}
We have $(V_{(i,j)})_{(i,j) \in \mu} \in w_X w^Z \cdot \Xt_\mu$.
\end{lemma}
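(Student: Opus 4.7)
The strategy is to verify that $(V_{(i,j)})_{(i,j) \in \mu}$ lies in $w_X w^Z \cdot \Xt_\mu$ by unwinding the definition of $\Xt_\mu$ as a subscheme of $Y_\mu = \prod_{(i,j) \in \mu} \Gr(k+j-i,n)$. After applying $(w_X w^Z)^{-1}$, this amounts to two checks: (a) each $V_{(i,j)}$ has dimension $k+j-i$, and (b) the chain $V_{(i+1,j)} \subset V_{(i,j)} \subset V_{(i,j+1)}$ holds for every $(i,j) \in \mu$, where for positions $(i',j') \notin \mu$ that appear on the boundary of $\mu$ we interpret $V_{(i',j')} = w_X w^Z \cdot E_{k+j'-i'}$; this is exactly the formula used in the construction, so boundary positions are ``free'' and only interior inclusions need verification.

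Step (a) is immediate from $v'(W_{(i,j)}) = u'(W_{(i,j)}) \oplus E^{k-a}$, since $u'(\kk^{a+b}) = E_{[n-k-b+1,\, n-k+a]}$ is disjoint from $E^{k-a} = E_{[n-k+a+1,\, n]}$, yielding dimension $(a+j-i) + (k-a) = k+j-i$. The essential ingredient for step (b) is the formula
\[
w_X w^Z \cdot E_{k-a+r} \;=\; v'(F_r) \qquad \text{for } 0 \le r \le a+b,
\]
which I would verify from the explicit actions: $w^Z$ fixes $e_1, \dots, e_{k-a}$ and sends $e_{k-a+s} \mapsto e_{k+b+1-s}$ for $s \in [1,a+b]$, while $w_X$ acts by $e_j \mapsto e_{n+1-j}$. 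Composing, $w_X w^Z$ carries $E_{k-a}$ to $E^{k-a}$ and $\scal{e_{k-a+1}, \dots, e_{k-a+r}}$ to $\scal{e_{n-k-b+1}, \dots, e_{n-k-b+r}} = u'(F_r)$, whence the claimed identity.

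Using this, verification of (b) splits into sub-cases. If $(i+1,j) \in \mu$, then $V_{(i+1,j)} = v'(W_{(i+1,j)})$ and $V_{(i,j)} = v'(W_{(i,j)})$, and the inclusion reduces to $W_{(i+1,j)} \subset W_{(i,j)}$, which holds since $(W_{(i,j)}) \in \Zt_\mu$. If instead $(i+1,j) \notin \mu$, the convention in $\Zt_\mu$ gives $W_{(i+1,j)} = F_{a+j-i-1} \subset W_{(i,j)}$, and the key formula translates this directly into $V_{(i+1,j)} = w_X w^Z \cdot E_{k+j-i-1} = v'(F_{a+j-i-1}) \subset v'(W_{(i,j)}) = V_{(i,j)}$. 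The dual inclusion $V_{(i,j)} \subset V_{(i,j+1)}$ is handled symmetrically, using the key formula with $r = a+j-i+1$ when $(i,j+1) \notin \mu$. The bounds $0 \le a+j-i \pm 1 \le a+b$ needed to apply the key formula are routine from $(i,j) \in \mu \subset a \times b$, and I don't expect any serious obstacle beyond ensuring the compatibility of the $\Zt_\mu$ and $\Xt_\mu$ boundary conventions, which is precisely the content of the formula $w_X w^Z \cdot E_{k-a+r} = v'(F_r)$.
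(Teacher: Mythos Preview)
Your proof is correct and follows essentially the same approach as the paper: both reduce the boundary inclusions to the identity $v'(F_{a+j-i}) = w_X w^Z \cdot E_{k+j-i}$, which makes the two definitions of $V_{(i,j)}$ (via $v'$ for $(i,j)\in\mu$, via $w_X w^Z$ for $(i,j)\notin\mu$) agree on the boundary. You spell out the dimension count and the verification of the key identity more explicitly than the paper does, but the argument is the same.
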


\begin{proof}
For $(i,j)$, $(i+1,j)$ and $(i,j+1)$ in $\mu$, the conditions $V_{(i+1,j)} \subset V_{(i,j)} \subset V_{i,j+1)}$ are clearly satisfied. We only need to check these conditions for $(i+1,j)$ or $(i,j+1)$ not in $\mu$. But $(i,j) \not \in \mu$, we have $W_{(i,j)} = F_{a+j-i}$ thus $v'(W_{(i,j)}) = v'(F_{a+j-i}) = E^{k-a} \oplus E_{[n-k-b+1,n-k-b+a+j-i]} = w_Xw^Z \cdot E_{k+j-i} = V_{(i,j)}$ and the result follows.
\end{proof}

\begin{proposition}
Let $\mu \in \P(a,a+b)$. The $X$-schemes $\Zt_\mu$ (via $\psi$) and $w_Xw^Z \cdot \Xt_\mu$ are isomorphic.
\end{proposition}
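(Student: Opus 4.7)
The plan is to construct an explicit inverse to the morphism $\varphi_\mu : \Zt_\mu \to w_Xw^Z \cdot \Xt_\mu$ sending $(W_{(i,j)})$ to the tuple $(V_{(i,j)})$ defined just above. Compatibility with the two $X$-scheme structures is automatic from the definition, since $\varphi_\mu((W_{(i,j)}))_{(1,1)} = v'(W_{(1,1)}) = \psi((W_{(i,j)}))$ and the structure map on $w_Xw^Z \cdot \Xt_\mu$ sends $(V_{(i,j)})$ to $V_{(1,1)}$.

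The key geometric fact is that for any point $(V_{(i,j)})_{(i,j) \in \mu}$ of $w_Xw^Z \cdot \Xt_\mu$ every component satisfies
\[
E^{k-a} \subset V_{(i,j)} \subset E^{k+b}.
\]
For the lower inclusion, fix $j$, let $i^\ast = \max\{i' : (i',j) \in \mu\}$, and propagate along the descending chain $V_{(i,j)} \supset V_{(i+1,j)} \supset \cdots \supset V_{(i^\ast+1,j)} = w_Xw^Z \cdot E_{k+j-i^\ast-1}$ coming from the boundary of $\mu$. Since $w^Z$ fixes $E_{k-a}$ pointwise and $w_X \cdot E_{k-a} = E^{k-a}$, a direct computation gives $w_Xw^Z \cdot E_m \supset E^{k-a}$ for every $m \geq k-a$; as $\mu$ lies in the $a \times b$ rectangle we have $i^\ast \leq a$, hence $k+j-i^\ast-1 \geq k-a$. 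The upper inclusion is symmetric, using the ascending chain bounded by $V_{(i, \mu_i+1)} = w_Xw^Z \cdot E_{k+\mu_i+1-i}$ together with the complementary identity $w_Xw^Z \cdot E_m \subset E^{k+b}$ for $m \leq k+b$.

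Once this is established I would set $W_{(i,j)} := (u')^{-1}(V_{(i,j)} \cap u'(\kk^{a+b}))$ for $(i,j) \in \mu$. Since $E^{k+b} = E^{k-a} \oplus u'(\kk^{a+b})$, the double inclusion above gives $V_{(i,j)} = E^{k-a} \oplus u'(W_{(i,j)}) = v'(W_{(i,j)})$, and in particular $\dim W_{(i,j)} = a+j-i$. The inclusions $V_{(i+1,j)} \subset V_{(i,j)} \subset V_{(i,j+1)}$ then transfer verbatim to the corresponding inclusions among the $W_{(i,j)}$. The explicit identity
\[
w_Xw^Z \cdot E_{k+j-i} \;=\; E^{k-a} \oplus E_{[n-k-b+1,\, n-k-b+a+j-i]} \;=\; v'(F_{a+j-i})
\]
shows that for $(i,j) \notin \mu$ the boundary formulae $W_{(i,j)} = F_{a+j-i}$ required by the definition of $\Zt_\mu$ are also satisfied. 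The assignment $(V_{(i,j)}) \mapsto (W_{(i,j)})$ is therefore a well-defined morphism $w_Xw^Z \cdot \Xt_\mu \to \Zt_\mu$, built from intersections and linear projections, and manifestly two-sided inverse to $\varphi_\mu$.

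The technical heart of the argument is precisely the double inclusion $E^{k-a} \subset V_{(i,j)} \subset E^{k+b}$: it is what allows the candidate inverse to be defined, and its verification rests crucially on $\mu$ fitting inside the $a \times b$ rectangle, so that the chains terminate inside the ``slab'' $E^{k+b}/E^{k-a}$ where $v'$ provides a bijection with the original $\kk^{a+b}$.
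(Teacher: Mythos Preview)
Your argument is correct, but it takes a different route from the paper's. The paper simply observes that the map $(W_{(i,j)}) \mapsto (v'(W_{(i,j)}))$ is a closed embedding (because $v'$ is), notes that both $\Zt_\mu$ and $w_Xw^Z \cdot \Xt_\mu$ are smooth irreducible of dimension $|\mu|$ (they are towers of $\p^1$-bundles), and concludes that a closed embedding between such varieties is an isomorphism; compatibility of the $X$-structures is then checked exactly as you do. You instead build an explicit inverse by first proving the ``slab'' inclusions $E^{k-a} \subset V_{(i,j)} \subset E^{k+b}$ via the boundary chains, and then retracting through $v'$. Your approach is more hands-on and avoids invoking smoothness or dimension counting, at the cost of the extra verification of the double inclusion; the paper's approach is shorter but leans on the global geometry of Bott--Samelson towers. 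Both are valid, and your emphasis on the slab inclusion nicely isolates exactly where the hypothesis $\mu \in \P(a,a+b)$ enters.
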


\begin{proof}
The above morphism sending $(W_{(i,j)})_{(i,j) \in \mu} \in \Zt_\mu$ to $(V_{(i,j)})_{(i,j) \in \mu} \in \Xt_\mu$ is a closed embedding. Since both schemes are smooth are irreducible of the same dimension, this map is an isomorphism. Wee need to check that the morphisms to $X$ coincide. But the composition $\Zt_\mu \to w_Xw^Z \cdot \Xt_\mu \to X$ is given by $(W_{(i,j)})_{(i,j) \in \mu} \mapsto (V_{(i,j)})_{(i,j) \in \mu} \mapsto V_{(1,1)}$ and therefore maps $(W_{(i,j)})_{(i,j) \in \mu} \in \Zt_\mu$ to $v'(W_{(1,1)}) = \psi(W_{(1,1)})$. It coincides with $\psi$.
\end{proof}

\begin{corollary}
Let $\la \in \P(k,n)$. As $X$-schemes, we have
$$\Xt_\la \times_X v'(Z) = \Xt_\la \times_X X^{b^a}  \simeq  
\left\{\begin{array}{ll}
\emptyset & \textrm{for $\la \not \geq (b^a)^\vee$}, \\
w_Xw^Z \cdot \Xt_ {(\la^\vee)^{\vee_Z}}& \textrm{for $\la \geq (b^a)^\vee$}.\\
\end{array}\right.$$
\end{corollary}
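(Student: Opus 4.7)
The plan is to chain together the two immediately preceding results, so there is essentially no new content beyond bookkeeping. Let me set $\mu = (\la^\vee)^{\vee_Z}$ when $\la \geq (b^a)^\vee$, so that $\mu \in \P(a,a+b)$ by the duality conventions fixed in Section~\ref{subsec:bruhat}.

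First I would invoke the corollary at the end of the fiber product subsection, which already gives the statement with $\Zt_\mu$ in place of $w_Xw^Z \cdot \Xt_{(\la^\vee)^{\vee_Z}}$: if $\la \not\geq (b^a)^\vee$, then $\Xt_\la \times_X v'(Z) = \emptyset$, and otherwise $\Xt_\la \times_X v'(Z) \simeq \Zt_\mu$ as $X$-schemes, where $\Zt_\mu$ is made into an $X$-scheme via $\psi$. The empty case is then already the empty case of the target statement, and requires nothing further.

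For the non-empty case, I would apply the proposition just above, which asserts the $X$-scheme isomorphism $\Zt_\mu \simeq w_Xw^Z \cdot \Xt_\mu$, where the right-hand side is viewed as an $X$-scheme via $w_Xw^Z \circ \pi_\mu$ and the isomorphism is the explicit one sending $(W_{(i,j)})$ to $(V_{(i,j)})$ with $V_{(i,j)} = v'(W_{(i,j)})$ on $\mu$ and $V_{(i,j)} = w_Xw^Z \cdot E_{k+j-i}$ off $\mu$. Composing these two $X$-scheme isomorphisms yields $\Xt_\la \times_X v'(Z) \simeq w_Xw^Z \cdot \Xt_{(\la^\vee)^{\vee_Z}}$, which is exactly the claimed identification.

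There is no real obstacle here, since the two supporting statements have already been proved; the only point worth flagging is to confirm that the $X$-scheme structures match under composition, i.e.\ that the map $\psi$ on $\Zt_\mu$ agrees with $w_Xw^Z \circ \pi_\mu$ under the isomorphism of the proposition — but this compatibility is exactly what was verified at the end of the proof of that proposition, so no extra argument is needed.
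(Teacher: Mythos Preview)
Your proposal is correct and matches the paper's approach exactly: the corollary is stated in the paper without proof precisely because it follows immediately by composing the earlier corollary (giving $\Xt_\la \times_X v'(Z) \simeq \Zt_\mu$ or $\emptyset$) with the preceding proposition (giving $\Zt_\mu \simeq w_Xw^Z \cdot \Xt_\mu$ as $X$-schemes). Your check that the $X$-scheme structures are compatible is the only thing one might want to mention, and as you note it is already handled in the proof of that proposition.
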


\begin{corollary}
\label{coro-cobor}
Let $\la \in \P(k,n)$. Then in $\Omega^*(X)$, we have
$$[X_{b^a}] \cdot [\Xt_\la] =  
\left\{\begin{array}{ll}
[\Xt_ {(\la^\vee)^{\vee_Z}}] & \textrm{for $\la \geq (b^a)^\vee$}, \\
0 & \textrm{for $\la \not \geq (b^a)^\vee$}. \\
\end{array}\right.$$
\end{corollary}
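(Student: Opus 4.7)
The plan is to deduce the corollary directly from the isomorphism of $X$-schemes in the immediately preceding corollary, by invoking two standard features of algebraic cobordism: the compatibility of the product with transverse fiber products of morphisms from smooth schemes, and the invariance of classes under the action of a connected algebraic group on $X$.

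First I would rewrite the product $[X_{b^a}] \cdot [\Xt_\la]$ as $[v'(Z)] \cdot [\Xt_\la]$. Both $X_{b^a}$ and $v'(Z) = X^{(b^a)^\vee}$ are smooth (each is isomorphic to $\Gr(a,a+b)$) and, as already noted in Subsection~\ref{subsec:bruhat}, they are $\GL_n(\kk)$-translates of one another inside $X$: concretely $v'(Z) = w_X \cdot X_{b^a}$. Since $\GL_n(\kk)$ is connected, the automorphism of $X$ induced by any element acts as the identity on $\Omega^*(X)$, so $[X_{b^a}] = [v'(Z)]$ in $\Omega^*(X)$.

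Second, I would invoke transversality to express this product as the class of a fiber product. The morphism $\pi_\la : \Xt_\la \to X$ has smooth source and $v'(Z) \hookrightarrow X$ is a smooth closed embedding; the Kleiman--Bertini argument already carried out in the proof of the preceding proposition shows that $\Xt_\la \times_X v'(Z)$ is smooth of the expected dimension (and empty if $\la \not\geq (b^a)^\vee$), hence the two morphisms are Tor-independent. Consequently
$$[v'(Z)] \cdot [\Xt_\la] = [\Xt_\la \times_X v'(Z) \to X]$$
in $\Omega^*(X)$ by the transversality axiom of algebraic cobordism \cite{LM}. Substituting the identification from the preceding corollary gives $0$ when $\la \not\geq (b^a)^\vee$, and $[w_Xw^Z \cdot \Xt_{(\la^\vee)^{\vee_Z}} \to X]$ otherwise; a final application of $\GL_n(\kk)$-translation invariance removes the factor $w_Xw^Z$ and yields the stated formula.

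The only real obstacle is invoking the correct transversality statement: one has to verify that the smoothness of the scheme-theoretic fiber product, of the correct dimension, provided by Kleiman--Bertini is enough to identify the cobordism product of the two classes with the fiber-product class inside the Levine--Morel formalism. Once this technical point is settled, and one has the translation invariance at hand, the corollary drops out immediately from the preceding structural isomorphisms of $X$-schemes.
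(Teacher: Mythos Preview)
Your proposal is correct and matches the paper's (implicit) argument: the corollary is stated without proof immediately after the $X$-scheme isomorphism $\Xt_\la \times_X v'(Z) \simeq w_Xw^Z \cdot \Xt_{(\la^\vee)^{\vee_Z}}$, and the intended deduction is precisely via transversality plus $\GL_n(\kk)$-translation invariance as you describe. The one point worth tightening is purely expository: rather than saying ``$v'(Z)\hookrightarrow X$ is a smooth closed embedding'' (which could be misread as asserting smoothness of the morphism), say that $v'(Z)$ is a smooth closed subvariety, so that the smoothness of the fiber product with the expected dimension established in the preceding Proposition gives Tor-independence and hence the product formula in $\Omega^*(X)$.
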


\begin{remark}
1. These results were inspired by several similar results in other cohomology theories. In particular, the results explained in Corollary \ref{coro:coho} are the classical part of Seidel symmetries in quantum cohomology \cite{seidel}. The results of Seidel are not explicit but were made explicit in \cite{CMP2} and \cite{CMP4}. These results extend to quantum $K$-theory. This will be presented in a forthcoming work \cite{BCMP}. We expect the same results to be valid in quantum cobordism once defined.

2. We expect more general results of the same type for other homogeneous space. 
These will be studied by the second author in forthcoming work.
\end{remark}

\section{Generalized Schubert polynomials and generalized Hecke algebras}

In this section, we discuss how far classical Grothendieck polynomials, which are representatives of Schubert classes in Borel's presentation of $K$-theory, are from the representatives in Borel's presentation of algebraic cobordism of Bott-Samelson resolutions of Schubert varieties. For $K$-theory (that is $K_0$), the computation of polynomial representatives for classes of Schubert varieties has been done by Fomin-Kirillov \cite{FK1}, \cite{FK2}.
We establish a generalization of
the main theorem of \cite{FK1}. Building on their work, Buch \cite{Bu} computed Littlewood-Richard\-son rules for $K$-theory.

\subsection{Divided difference operators}

Recall that $K$-theory corresponds to the multiplicative formal group law. The methods of Buch and Fomin-Kirilov do not generalize to the universal formal group law, that is algebraic cobordism. However, we will show that they apply in a much weaker form to {\em hyperbolic formal group laws} (see Definition \ref{defhfgl} below). For $i \in [1,n]$, let $s_i$ be the transposition of $[1,n]$ exchanging $i$ and $i+1$. 

\begin{definition}\label{generalDDO} Let $F$ be a
formal group law over $R$ with inverse $\chi$.

1. For $i \in [1,n]$, define $\sigma_i \in {\rm End}(R[[x_1,\ldots,x_n]])$ by 
$$(\sigma_i f)(x_1,\ldots,x_n)=f(x_{s_i(1)},\ldots,x_{s_i(n)}).$$

2. For $i \in [1,n]$, define $C_i,\Delta_i \in {\rm End}(R[[x_1,\ldots,x_n]])$ by 
$$C_{i}=(\id + \sigma_i)\frac1{F(x_{i},\chi(x_{i+1}))} \textrm{ and } \Delta_{i}=\frac{1}{F(x_{i+1},\chi(x_i))}(\id - \sigma_{i}).$$
\end{definition}

\begin{remark}
Note that the above operators are well defined in $R[[x_1,\ldots,x_n]]$ since $F(x,\chi(y))$ can be written $(x-y)g(x,y)$ with $g \in R[[x,y]]$.
\end{remark}

This definition is taken from \cite[p.71]{HK} and \cite[section 3]{CPZ}. When applying it to the additive formal group law, one recovers the usual definition as e.g. in \cite[section 2.3.1]{Ma} up to a sign (observe that $\sigma_i \circ F(x_{i+1},\chi(x_i)) = F(x_{i},\chi(x_{i+1}))$). For the multiplicative formal group
law $F(x,y)=x + y + \beta xy$, the definition of $C_{i}$ yields the $\beta$-DDO $\pi_i^{(\beta)}$ of
\cite{FK1}, which for $\beta=-1$ specializes to the isobaric DDO of
\cite{Bu}. Moreover, still for the multiplicative formal group law $F(x,y)=x + y + \beta xy$, the operator $\Delta_{i}$ above (which equals the one of \cite[section 3]{CPZ}) coincides up to sign with the operator
$\pi_i^{(\beta)} + \beta$ which appears in \cite[Lemma 2.5]{FK1}.

\medskip

Recall \cite{BE1} that the braid relations for the operators $C_{i}$
only hold in if the FGL is additive or multiplicative. We therefore need to keep track of reduced expressions to define generalized Schubert polynomials, which is not necessary in \cite[Definition 2.1]{FK1}.

\subsection{Generalized Schubert polynomials}

The following definition generalizes both Schubert polynomials
for Chow groups and Grothendieck polynomials for $K$-theory.

\begin{definition}\label{defLG}
Let $w$ be a permutation and $\uw$ be a reduced expression of $w$ as product in the $(s_i)_{i \in [1,n-1]}$. Define the generalized Schubert polynomial $\LG_\uw$ by induction:
\begin{itemize}
  \item[(a)] $\LG_{1}(x_1,\ldots,x_{n}) = x_1^{n-1}x_{2}^{n-2}\ldots x_{n-1}$
  \item[(b)] $\LG_{\uw s_i} := C_{i}\LG_{\uw}$ if $\uw s_i$ is a reduced
word.
\end{itemize}
\end{definition}

Note that this notation is different from the one
used in \cite{FK1} and elsewhere: Our $\LG_{1}$
corresponds to their $\LG_{w_0}$ and our $\LG_{\uw}$ to their $\LG_{w_0w}$. We decided to adopt this notation since there is a unique class for the point as well as a unique reduced expression for $1$, but a Bott-Samelson resolution and a polynomial $\LG_{\uw_0}$ for each reduced expression $\uw_0$ of the element $w_0$.

For any permutation $w$, the Bott-Samelson resolutions $\Xt_\uw \to X_w$ of the Schubert variety $X_w$ are indexed by the reduced word $\uw$ of $w$. It was proved in \cite[Theorem 3.2]{HK} that the polynomial $\LG_\uw$ represents the class of the resolution $\Xt_\uw \to X_w$ in $\Omega^*(G/B)$. 

\medskip

Let $S$ be the ideal in $R[[x_1,\ldots,x_n]]$ generated by symmetric 
polynomials of positive degree. The polynomial
$\LG_ {1}$ corresponds to the cobordism class of a point.
Modulo $S$, the polynomial $n!\LG_ {1}$ has several equivalent
descriptions (compare e.g. \cite[Remark 2.7]{HK}, where 
$\Delta_n$ differs by a scalar from the one below).

\begin{lemma}\label{pt} Let $A^*(-)$ be an oriented cohomology theory
with associated FGL $F$. Then we have
$$(a) \ \Delta_n:=\prod_{1 \leq i < j \leq n}(x_i - x_j) \equiv n! x_1^{n-1}x_{2}^{n-2} \cdots x_{n-1}
 = n! \LG_{1} \ mod \ S$$
and
$$(b) \ \Delta_n \equiv \Delta_n^F:=\prod_{1 \leq i < j \leq n}(x_i -_{F} x_j)
\ mod \ S.$$  
\end{lemma}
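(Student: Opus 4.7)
My plan is to use the Borel presentation (Theorem 1.2), which identifies $R[x_1,\ldots,x_n]/S$ with a free $R$-module having the Artin basis $\{x_1^{a_1}\cdots x_n^{a_n} : 0 \leq a_i \leq n-i\}$. Two consequences will be essential: the top polynomial-degree piece (degree $\binom{n}{2}$) is free of rank one, generated by $\LG_1 = x_1^{n-1}\cdots x_{n-1}$; and any polynomial in $R[x]$ that is homogeneous of polynomial degree strictly greater than $\binom{n}{2}$ automatically lies in $S$.

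For part (a), the preceding observation shows $\Delta_n \equiv c\LG_1 \bmod S$ for a unique $c \in R$, since $\Delta_n$ is homogeneous of polynomial degree $\binom{n}{2}$. To pin down $c$, I apply the classical additive divided difference operator $\partial_{w_0}$, obtained by composing $\partial_i := (\id - \sigma_i)/(x_i - x_{i+1})$ along any reduced expression for the longest permutation (this is the paper's $\Delta_i$ for the additive FGL, up to sign). Because the additive DDOs satisfy the braid relations, $\partial_{w_0}$ is unambiguously defined on $R[x]$. Moreover, each $\partial_i$ annihilates every $s_i$-symmetric polynomial and hence every symmetric polynomial, so $\partial_{w_0}$ descends to $R[x]/S$. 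From the inductive definition of Schubert polynomials we have $\partial_{w_0}(\LG_1) = 1$, while the standard identity $\partial_{w_0}(f) = \Delta_n^{-1}\sum_{\sigma \in S_n}\mathrm{sgn}(\sigma)\sigma(f)$ applied to $f = \Delta_n$ (using $\sigma(\Delta_n) = \mathrm{sgn}(\sigma)\Delta_n$, so each summand reduces to $\Delta_n$) gives $\partial_{w_0}(\Delta_n) = n!$. Reducing modulo $S$ then forces $c = n!$.

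For part (b), expand $x_i -_F x_j = F(x_i, \chi(x_j)) = (x_i - x_j) + r_{ij}$, where $r_{ij} \in R[[x_i,x_j]]$ has polynomial degree at least $2$ in $(x_i, x_j)$. Multiplying these out,
$$\Delta_n^F = \prod_{i<j}\bigl((x_i - x_j) + r_{ij}\bigr),$$
the unique summand of polynomial degree $\binom{n}{2}$ is $\prod_{i<j}(x_i-x_j) = \Delta_n$, while every other summand has polynomial degree strictly greater. Each homogeneous component of $\Delta_n^F - \Delta_n$ therefore lies in $S$ by the degree bound from the first paragraph. A short convergence check handles the power-series subtlety: writing the degree-$d$ component as $\sum_i e_i g_{i,d}$ with $g_{i,d}$ homogeneous of degree $d-i$, the formal sums $g_i := \sum_{d > \binom{n}{2}} g_{i,d}$ converge in $R[[x_1,\ldots,x_n]]$ and yield $\Delta_n^F - \Delta_n = \sum_i e_i g_i \in S$.

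The main technical point is this last assembly of infinitely many homogeneous pieces into a genuine element of $S$ rather than of its $(x)$-adic completion; once that is settled, both parts reduce to the Artin-basis degree bound combined with elementary additive Schubert calculus.
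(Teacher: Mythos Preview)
Your proof is correct, but both parts follow a genuinely different route from the paper's.

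For part (a), the paper argues by induction on $n$: it first shows $\prod_{1<i\le n}(x_1-x_i)\equiv nx_1^{n-1}\bmod S$ (by differentiating $\prod_i(x-x_i)\equiv x^n$ and setting $x=x_1$), then shows that $x_1^{n-1}$ kills any nonconstant symmetric polynomial in $x_2,\ldots,x_n$ modulo $S$, and combines these with the factorisation $\Delta_n=\Delta_{n-1}(x_2,\ldots,x_n)\cdot\prod_{1<i\le n}(x_1-x_i)$. Your argument instead invokes the Artin basis to see at once that the top graded piece of $R[x]/S$ is free of rank one on $\LG_1$, and then pins down the coefficient via $\partial_{w_0}$ and the antisymmetrisation formula. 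Your route is shorter and more conceptual, at the cost of importing the Artin basis and the identity $\partial_{w_0}=\Delta_n^{-1}\sum_\sigma\mathrm{sgn}(\sigma)\,\sigma$; the paper's route is entirely self-contained.

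For part (b), the paper factors multiplicatively rather than additively: it writes $x_i-_F x_j=(x_i-x_j)(1+b_{ij})$ with $b_{ij}\in(x_i,x_j)$, so that $\Delta_n^F=\Delta_n(1+q)$ with $q(0)=0$, and then observes (using (a) and $x_1^n\equiv 0$) that $\Delta_n x_i\in S$ for every $i$. Writing $q=\sum_i x_i q_i$ gives $\Delta_n q=\sum_i(\Delta_n x_i)q_i\in S$ as a \emph{finite} sum of elements of $S$, so no convergence bookkeeping is needed. Your additive expansion reaches the same conclusion via the degree bound, but forces you to reassemble infinitely many homogeneous pieces into a single element of $S$; your final paragraph handles this correctly, though the paper's factorisation sidesteps the issue entirely.
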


\begin{proof} 
To show (a), one first verifies that modulo $S$ we have
$\prod_{1 < i \leq n}(x_1 - x_i) \equiv n x_1^{n-1}$,
deriving the equality $\prod_{1 \leq i \leq n}(x - x_i) \equiv x^{n}$
and setting $x=x_1$.
Then one shows $x_1^{n-1}p(x_1,\ldots,x_{n-1}) \equiv 0$
for any symmetric non-constant
polynomial $p(x_1,\ldots,x_{n-1})$, writing
$p(x_1,\ldots,x_{n-1})=x_1 q(x_1,\ldots,x_{n})$ 
and using that $x_1^n \equiv 0$ modulo $S$.
Now proceed by induction on $n$. The claim holds for $n=1$.
Using the factorization $$\prod_{1 \leq i < j \leq n}(x_i - x_j)=\prod_{1 < i < j \leq n}(x_i - x_j) \prod_{1 < i \leq n}(x_1 - x_i),$$ the claim for $n$ 
follows using the induction hypothesis for $n-1$ and the above 
two equalities modulo $S$.

For (b), note that $x_i -_{F} x_j = 0$ if $x_j=x_i$, which implies that
 $x_i -_{F} x_j$ is divisible by $x_i - x_j$,
Hence $x_i -_{F} x_j = (x_i - x_j)a(x_i,x_j)$ 
with $a(x_i,x_j)=1 + b(x_i,x_j)$ and $b \in (x_i,x_j)$. Thus
$\Delta_n^F = \Delta_n + \Delta_n q(x_1,\ldots,x_n)$
with $q(0,\ldots,0)=0$. Now using part (a) and the 
equality $x_1^n \equiv 0 \ mod \ S$,
we deduce that $\Delta_n x_i \equiv 0 \ mod \ S$
for $i=1$ and thus (use a suitable permutation) 
for all $i$. Hence $\Delta_n q(x_1,\ldots,x_n) \equiv 0 \ mod \ S$ 
as claimed.
\end{proof}

\begin{remark}
  Some authors use $x_n^{n-1} x_{n-1}^{n-2} \cdots x_2$ in place of $x_1^{n-1}x_2^{n-2} \cdots x_{n-1}$. Modulo $S$ these two classes only differ by the
sign $(-1)^{\frac{n(n-1)}{2}}$.
\end{remark}

\subsection{Hyperbolic formal group laws}

We now define hyperbolic formal group laws, which generalize
the additive and multiplicative ones. 

\begin{definition}\label{defhfgl}
The hyperbolic formal group law $F$ over $R=\Z[\mu_1,\mu_2]$ and its inverse $\chi$ are given by
$$F(x,y)=\frac{x+y - \mu_1xy}{1 + \mu_2 xy} \textrm{ and }\chi(x)=-\frac{x}{1 - \mu_1x}.$$
\end{definition}

Recall that formal group laws are by definition
power series in two variables, and all fractions here and below
may be written as such. See e.g. \cite{BB}, \cite[Example 2.2 (d)]{HMSZ}
and \cite[2.2]{LZ} for more on hyperbolic formal group laws.
Combining their computations, we see that
$$F(x,y)=x + y - \mu_1xy + \mu_2(x^2y+xy^2) + \mu_2 \mu_1 x^2 y^2 + O(5).$$
In section 4.2. below, we explain how these FGLs lead to
certain elliptic cohomology theories $E^*(-)$.
If $\mu_2=0$, 
these cohomology theories specialize to Chow groups
(if $\mu_1=0$), $K_0$ (if $\mu_1$ is invertible,
thus sometimes called periodic $K$-theory), 
connective $K_0$ and (if $\mu_1=0$ but $\mu_2 \neq 0$)
theories associated with Lorentz FGLs.
\medskip

\begin{definition}\label{defkappa}
Let $F$ be a formal group law. Define 
$$\kappa_i=\kappa_i^F= \frac{1}{F(x_i,\chi(x_{i+1}))} +  \frac{1}{F(x_{i+1},\chi(x_i))}.$$
\end{definition}

\begin{remark}
In the above definition, $\kappa_i$ is a formal series. Indeed, writing $F(x,\chi(y)) = (x-y)g(x,y)$ with $g$ a formal series with constant term equal to $1$, we get
$$\kappa_i = \frac{g(y,x) - g(x,y)}{(x-y)g(x,y)g(y,x)}$$
Since the numerator vanishes for $x = y$ there exists a formal series $h$ such that $g(y,x) - g(x,y) = (x-y)h(x,y)$ and we get
$$\kappa_i = \frac{h(x,y)}{g(x,y)g(y,x)}$$
which can be written as a formal series.
\end{remark}

\begin{remark}\label{easycomp}
An easy computation shows that $\Delta_{i}=\kappa_i-C_{i}$.
\end{remark}

\begin{example}\label{kappa}
The three formal group laws we have studied so far are $F_a$, $F_m$ and 
$F_e$, namely the additive, the multiplicative and the elliptic (or hyperbolic) 
formal group laws:
$$F_a(x,y) = x + y, \ \ \ F_m(x,y) = x + y - \mu_1 xy \ \ \ \textrm{and}  \ \ \ F_e(x,y) = \frac{x + y - \mu_1 xy}{1 + \mu_2 xy}.$$
In these cases, we have $\kappa_i^{F_a}= 0$, 
$\kappa_i^{F_m}= \kappa_i^{F_e}= \mu_1$. So in all these examples, 
$\kappa:=\kappa_i$ is independent of $i$.
\end{example}

We now define a variant of the Hecke algebra generalizing \cite[Definition 2.2]{FK1} with respect to a fixed hyperbolic formal group law $F$. Setting $\mu_2=0$, we obtain the Hecke algebra of \cite{FK1}, corresponding to (connective or periodic) $K$-theory.

\begin{definition}\label{generalHecke} For any hyperbolic
formal group law $F$ defined over $R=\Z[\mu_1,\mu_2]$ consider the commutative ring  $\E:=R[[x_1,\ldots,x_n]]/(\mu_1-\kappa)$.
The {\it generalized Hecke algebra} $\Al$ is the quotient
of the associative $\E$-algebra $\E \langle u_1,\ldots,u_n \rangle$
by the relations
\begin{itemize}
\item $u_i x_j=x_j u_i \textrm{ for all $i,j$, }$
\item $u_iu_j = u_ju_i \textrm{ for $|i-j| > 1$, }$
\item $u_i u_{i+1} u_i = u_{i+1} u_i u_{i+1}  \textrm{ for all $i$,}$
\item $u_i^2 = - \mu_1 u_i \textrm{ for all $i$, }$
\item $\mu_2x_i x_{i+1}u_i=0 \textrm{ for all $i$.}$ 
\end{itemize}
\end{definition}

Although this algebra generalizes the ones of \cite{FK1}, \cite{Bu} and others, note that it is different from the formal {\em Demazure} algebras studied in \cite{CPZ}, \cite{HMSZ}.  See Remark \ref{different} below for more details on this.

\begin{remark}
Note that the elements $u_i$ satisfy the braid relations. Hence for any permutation $w$, we can define the element $u_w$  as $u_w = u_{i_1} \ldots u_{i_r}$, where $w = {s_{i_1}} \ldots s_{s_{i_r}}$ is any reduced expression of $w$.
\end{remark}

We now generalize \cite[Theorem 2.3]{FK1} from multiplicative to hyperbolic formal group laws. Define
$$\S(x_1,\ldots,x_{n-1})=\prod_{j=1}^{n-1} \prod_{i=n-1}^j(1+x_ju_i).$$ 

\begin{theorem}\label{thmFK1hyp}
For any hyperbolic FGL, in the generalized Hecke algebra $\Al$ of Definition \ref{generalHecke}, we have
$$\S(x_1,\ldots,x_{n-1})=\sum_{w \in \Sn}  \LG_{\uw} u_{w_0w},$$
where $\uw$ is any reduced expression of $w$ and $w_0(i)=n+1-i$
as usual.
\end{theorem}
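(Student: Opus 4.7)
The plan is to prove the identity by induction on $n$, generalizing the argument of Fomin-Kirillov \cite[Theorem 2.3]{FK1} from the multiplicative to the hyperbolic setting. The key new feature is that, unlike in the multiplicative case, the operators $C_i$ do not satisfy braid relations for a hyperbolic FGL, so the polynomial $\LG_\uw$ depends on the reduced expression $\uw$. The relation $\mu_2 x_i x_{i+1} u_i = 0$ in $\Al$, together with the identification $\mu_1 = \kappa$ in $\E$, is designed precisely so that the products $\LG_\uw u_{w_0w}$ become well-defined in $\Al$ regardless of the choice of $\uw$, and so that the identity holds.

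For the base case $n=2$, one checks directly: $\S(x_1)=1+x_1u_1$, while $\LG_1=x_1$ and $\LG_{s_1}=C_1(x_1)=\frac{x_1}{F(x_1,\chi(x_2))}+\frac{x_2}{F(x_2,\chi(x_1))}=1$ (the last equality is straightforward after combining over a common denominator). So the right-hand side equals $x_1 u_1+1=\S(x_1)$.

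For the inductive step, I would identify a recursive factorization of the form $\S_n = \S_{n-1}' \cdot T_n$ where $\S_{n-1}'$ matches the structure of $\S_{n-1}$ after reindexing and $T_n$ is an explicit product of factors $(1+x_j u_i)$ corresponding to the newly introduced index. Expanding $T_n$ term by term and using the Hecke algebra relations (braid, commutation, $u_i^2=-\mu_1 u_i$, and crucially $\mu_2 x_i x_{i+1} u_i=0$), each monomial's $u$-part is brought to the normal form $u_{w_0 w'}$ for some $w'\in\Sn$, and its polynomial coefficient is identified with $\LG_{\uw'}$ via the recursion $\LG_{\uw s_i}=C_i \LG_\uw$ of Definition \ref{defLG}. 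Matching these two recursions amounts to showing that right multiplication by $(1+x_j u_i)$ in $\Al$ induces the operator $C_i$ (with suitable shifts) on polynomial coefficients; this is a hyperbolic analogue of the operator identity underlying \cite[Lemma 2.5]{FK1}.

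The main obstacle is verifying this compatibility in the hyperbolic setting. For the multiplicative FGL, the identity follows easily from the definition of the isobaric divided difference operator. For the hyperbolic FGL, the expansion of $1/F(x_i,\chi(x_{i+1}))$ produces extra $\mu_2$-terms, and one must show that all such correction terms are annihilated when paired with the appropriate $u_i$. This is where the relation $\mu_2 x_i x_{i+1} u_i=0$ does its work: in the hyperbolic FGL $F(x,y)=(x+y-\mu_1 xy)/(1+\mu_2 xy)$, every occurrence of $\mu_2$ in $1/F(x_i,\chi(x_{i+1}))$ comes multiplied by $x_i x_{i+1}$, so multiplying by $u_i$ kills it, and the operator $C_i$ on polynomials then matches the action of the Hecke generator up to the identification $\mu_1=\kappa$. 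Carrying out this cancellation carefully, together with the combinatorial bookkeeping that identifies the right reduced expression $\uw$ with the order of the factors picked from $\S_n$, is the technical heart of the proof.
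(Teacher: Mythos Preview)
Your approach differs substantially from the paper's, and as written it has a genuine gap.

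The paper does \emph{not} induct on $n$. Instead, it proves a single operator identity
\[
-\Delta_i\,\S(x_1,\ldots,x_{n-1}) \;=\; \S(x_1,\ldots,x_{n-1})\, u_i \qquad\text{for all } i,
\]
and then, writing $\S = \sum_{w}\hat{\LG}_w u_{w_0 w}$ with a priori unknown coefficients, inducts on the \emph{length of $w$}: comparing the $u_{w_0 w}$-coefficients on both sides and using $\Delta_i = \kappa - C_i$ together with $u_i^2 = -\kappa u_i$ yields $C_i\hat{\LG}_w u_{w_0 w} = \hat{\LG}_{ws_i} u_{w_0 w}$, which is exactly the recursion defining $\LG_{\uw}$. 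The operator identity itself is established via three auxiliary facts (Lemma~\ref{FKhypkey}): $(1+x u_i)(1+\chi(x)u_i)=1$; a factorization of $(1+\chi(x_i)u_i)$; and $\Delta_i(1+\chi(x_{i+1})u_i) = -(1+\chi(x_{i+1})u_i)u_i$. It also uses the Yang--Baxter type commutation $\alpha_i(x_i)\alpha_i(x_{i+1}) = \alpha_i(x_{i+1})\alpha_i(x_i)$ (Proposition~\ref{someYBE}), which holds because the $u_i$ satisfy the ordinary braid relations in $\Al$. The relation $\mu_2 x_i x_{i+1} u_i = 0$ is used precisely once, in the middle step of Lemma~\ref{FKhypkey}, not as a global cancellation mechanism.

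Your induction-on-$n$ sketch runs into trouble at the very first step: the factorization $\S_n = \S_{n-1}'\cdot T_n$ you posit does not exist in any naive sense. Writing $\S_n = \alpha_1(x_1)\cdots\alpha_{n-1}(x_{n-1})$ with $\alpha_j(x_j) = (1+x_j u_{n-1})\cdots(1+x_j u_j)$, every factor $\alpha_j$ contains $u_{n-1}$, so you cannot peel off a copy of $\S_{n-1}$ without first commuting many $(1+x_j u_{n-1})$'s past other factors --- and doing that is exactly the Yang--Baxter commutation of Proposition~\ref{someYBE}, which you never invoke. Moreover, your claim that ``right multiplication by $(1+x_j u_i)$ induces the operator $C_i$ on polynomial coefficients'' is not correct as stated: the factor carries the monomial $x_j$, not $1/F(x_i,\chi(x_{i+1}))$, so there is no direct link to $C_i$ at this level. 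What is true (and what the paper proves) is that right multiplication by $u_i$ on all of $\S$ coincides with applying $-\Delta_i = C_i - \kappa$; this global statement is what drives the recursion, and it needs the lemmas above. Your heuristic that ``every $\mu_2$ comes with $x_i x_{i+1}$'' is morally in the right direction but is not how the argument actually proceeds.
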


Before proving this theorem, we compare the generalized Schubert polynomials 
$\LG_{\uw}$ with the corresponding Grothendieck polynomials for $K$-theory.

\begin{definition}
Let $w$ be a permutation and $w = {s_{i_1}} \ldots s_{s_{i_r}}$ be any reduced expression. 

1. The support of $w$ is the set $\supp(w) = \{i_1,\ldots,i_r\}$. This is independent of the chosen reduced expression since its is preserved by the braid relations.

2. Define $I(w)$ as the ideal in $\E$ generated by the polynomials $\mu_2 x_i x_{i+1}$ for $i \in \supp(w_0w)$.

3. Let $\KLG_w$ be the $K$-theoretic Grothendieck polynomial representing $X_w$.
\end{definition}

\begin{corollary}\label{differ}
  Let $\uw = s_{\alpha_{i_1}} \ldots s_{\alpha_{i_r}}$ be a reduced expression of $w$. Then for $w$ a permutation and $\uw$ be any reduced expression for $w$, in
$\E$ we have
  $$\LG_{\uw} = \KLG_{w} \textrm{ mod } I(w).$$
\end{corollary}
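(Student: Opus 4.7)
The plan is to deduce the congruence from Theorem \ref{thmFK1hyp} together with the classical Fomin--Kirillov expansion for $K$-theory, by comparing both inside a common algebra. I would first introduce the auxiliary $\E$-algebra $\widetilde{\Al}$ obtained from Definition \ref{generalHecke} by dropping only the relation $\mu_2 x_i x_{i+1} u_i = 0$, so that $\Al = \widetilde{\Al}/J$ with $J$ the two-sided ideal of $\widetilde{\Al}$ generated by the $\mu_2 x_i x_{i+1} u_i$. The remaining relations of $\widetilde{\Al}$ are exactly those of the Hecke algebra of \cite{FK1} (with $\beta = -\mu_1$), extended to scalars $R$, and in particular the standard basis $\{u_v\}_{v \in \Sn}$ is free over $\E$.

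Next I would apply \cite[Theorem 2.3]{FK1} in $\widetilde{\Al}$: since $\KLG_w$ depends only on $\mu_1$ and the operators $C_i^{F_m}$ satisfy the braid relations, the Fomin--Kirillov identity upgrades verbatim to
$$\S(x_1,\ldots,x_{n-1}) = \sum_{w \in \Sn} \KLG_w\, u_{w_0 w} \quad \text{in } \widetilde{\Al}.$$
Theorem \ref{thmFK1hyp} gives the parallel expansion $\S = \sum_w \LG_\uw u_{w_0 w}$ in $\Al$; lifting to $\widetilde{\Al}$ and subtracting yields
$$\sum_{w \in \Sn} (\LG_\uw - \KLG_w)\, u_{w_0 w} \in J.$$
To extract the coefficient of $u_{w_0 w}$, I would analyze the $\{u_v\}$-expansion of $J$: an arbitrary element is a sum of terms $u_{v_1}(\mu_2 x_i x_{i+1} u_i) u_{v_2}$, each of which reduces (using $u_i^2 = -\mu_1 u_i$ to collect powers of $-\mu_1$) to a scalar multiple of $\mu_2 x_i x_{i+1}\, u_{v_1 * s_i * v_2}$, where $*$ denotes the Demazure--Hecke product. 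The combinatorial key is $v_1 * s_i * v_2 \geq s_i$ in Bruhat order, which forces $i \in \supp(v_1 * s_i * v_2)$; consequently, the coefficient of $u_v$ in any element of $J$ lies in the ideal $(\mu_2 x_j x_{j+1} : j \in \supp(v))$. Specializing to $v = w_0 w$ gives $\LG_\uw - \KLG_w \in I(w)$, as desired.

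The main obstacle I anticipate is this last step: showing that the two-sided ideal $J$ of $\widetilde{\Al}$ produces, on each $u_v$, coefficients only in $(\mu_2 x_j x_{j+1} : j \in \supp(v))$. This requires the Demazure-product description of products $u_{v_1} u_i u_{v_2}$ and the Bruhat-order fact that $v_1 * s_i * v_2 \geq s_i$. A more direct alternative by induction on $\ell(w)$, based on the identity $(C_i^{F_e} - C_i^{F_m})(f) = -\mu_2 x_i x_{i+1}(f - \sigma_i f)/(x_i - x_{i+1})$ (immediate from the computation $1/F_e(x,\chi(y)) - 1/F_m(x,\chi(y)) = -\mu_2 xy/(x-y)$), would split $\LG_{\uw s_i} - \KLG_{w s_i}$ into $(C_i^{F_e} - C_i^{F_m})(\LG_\uw) + C_i^{F_m}(\LG_\uw - \KLG_w)$; however, this route is more delicate since one must verify that $C_i^{F_m}$ sends $I(w)$ into $I(ws_i)$, which is subtle precisely when $\sigma_i$ mixes generators $\mu_2 x_p x_{p+1}$ for $p$ adjacent to $i$.
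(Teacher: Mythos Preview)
Your proposal is correct and matches the paper's intended derivation: the corollary is stated immediately after Theorem \ref{thmFK1hyp} with no separate proof, and the natural argument is precisely the one you give, comparing the expansion of $\S$ from Theorem \ref{thmFK1hyp} with the Fomin--Kirillov expansion in the larger algebra $\widetilde{\Al}$ where $\{u_v\}$ is a free basis. Your analysis of the two-sided ideal $J$ is the substantive step the paper leaves implicit, and it is handled correctly: in this degenerate Hecke algebra $u_{v_1}u_iu_{v_2}$ is indeed a scalar multiple of a single $u_{v_1*s_i*v_2}$, and $i\in\supp(v_1*s_i*v_2)$ follows from $v_1*s_i*v_2\geq s_i$ in Bruhat order, which is exactly why $I(w)$ is defined via $\supp(w_0w)$.
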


Some parts of the proof of Theorem \cite[Theorem 2.3]{FK1} are formal and immediately generalize to arbitrary formal group laws. Lemma 2.5 of \cite{FK1} just rephrases Remark \ref{easycomp}.
Several other crucial parts of the proof do not generalize to arbitrary FGLs. However, they do generalize to hyperbolic FGLs when working with the generalized Hecke algebra $\Al$. An important point in choosing hyperbolic FGL is the fact that the $\kappa_i$ are independent of $i$, so we have an action of the symmetric group on $\Al$ given by permutation on the variables $x_i$. From now on, we fix a hyperbolic formal group law $F$ and a positive integer $n$.

\begin{lemma}\label{FKhypkey}
Set 
$\alpha_i(x) = (1+xu_{n-1}) \cdots (1+xu_i).$
Then we have the following equalities in $\Al$.

1. $\alpha_{i+1}(x_{i+1})=\alpha_i(x_{i+1})(1 + \chi(x_{i+1})u_i)$;

2. $1 + \chi(x_i)u_i =(1 + F(x_{i+1},\chi(x_i))u_i)(1+\chi(x_{i+1})u_i)$;

3. $\Delta_i(1 + \chi(x_{i+1})u_i)=(1+\chi(x_{i+1})u_i)u_i$.

\end{lemma}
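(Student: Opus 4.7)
The plan is to verify each of the three identities by a direct computation in $\Al$, leaning on the quadratic relation $u_i^2=-\mu_1 u_i$ throughout and on the hyperbolic-specific relation $\mu_2 x_i x_{i+1} u_i =0$ in parts 2 and 3.

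For part 1, the definition of $\alpha_i$ immediately gives $\alpha_i(x)=\alpha_{i+1}(x)(1+xu_i)$, so the claim reduces to $(1+xu_i)(1+\chi(x)u_i)=1$ in $\Al$ for an arbitrary formal series $x$. Expanding and using $u_i^2=-\mu_1u_i$, the coefficient of $u_i$ is $x+\chi(x)-\mu_1 x\chi(x)$, which is exactly the numerator of $F(x,\chi(x))$ and vanishes by the very definition of $\chi$.

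For part 2, setting $a=F(x_{i+1},\chi(x_i))$ and $b=\chi(x_{i+1})$, the left-hand product expands to $1+(a+b-\mu_1 ab)u_i$. Associativity and commutativity of $F$ give $F(a,b)=x_{i+1}-_F x_i-_F x_{i+1}=\chi(x_i)$, equivalently $a+b-\mu_1 ab-\chi(x_i)=\mu_2 ab\chi(x_i)$. The factor $b\chi(x_i)=\chi(x_{i+1})\chi(x_i)$ is visibly divisible by $x_i x_{i+1}$ since $\chi(x)\in x\cdot R[[x]]$, so the error term is a multiple of $\mu_2 x_i x_{i+1}u_i$ and therefore vanishes in $\Al$; hence $(a+b-\mu_1 ab)u_i=\chi(x_i)u_i$, as required.

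For part 3, the hyperbolic hypothesis enters twice. First, by Example \ref{kappa} the $\kappa_i$ are independent of $i$, so $\sigma_i$ acts on $\Al$ by permuting $x_i,x_{i+1}$ and fixing every $u_j$, and $\Delta_i$ of Definition \ref{generalDDO} extends to an operator on $\Al$ via $\Delta_i(a)=(a-\sigma_i a)/F(x_{i+1},\chi(x_i))$, well-defined because $a-\sigma_i a$ vanishes when $x_i=x_{i+1}$ and is therefore divisible by $F(x_{i+1},\chi(x_i))$ in the power-series ring. Applied to $1+\chi(x_{i+1})u_i$ this yields $(\chi(x_{i+1})-\chi(x_i))u_i/F(x_{i+1},\chi(x_i))$; plugging in the closed forms $\chi(x)=-x/(1-\mu_1 x)$ and $F(x_{i+1},\chi(x_i))=(x_{i+1}-x_i)/(1-\mu_1 x_i-\mu_2 x_i x_{i+1})$ specific to the hyperbolic FGL, the ratio simplifies, up to the sign convention for $\Delta_i$, to $(1-\mu_1 x_i-\mu_2 x_i x_{i+1})/[(1-\mu_1 x_i)(1-\mu_1 x_{i+1})]$. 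Second, multiplying by $u_i$ and invoking $\mu_2 x_i x_{i+1} u_i=0$ collapses the $\mu_2$-correction and leaves $1/(1-\mu_1 x_{i+1})\cdot u_i=(1-\mu_1\chi(x_{i+1}))u_i$, which matches the right-hand side $(1+\chi(x_{i+1})u_i)u_i$ computed via $u_i^2=-\mu_1 u_i$. The main obstacle will be precisely this part 3: the hyperbolic hypothesis is indispensable, as without it neither the $S_n$-action on $\Al$ nor the relation $\mu_2 x_i x_{i+1}u_i=0$ (which is what absorbs the quadratic correction coming from the denominator $1+\mu_2 xy$ of the FGL) is available, whereas parts 1 and 2 are comparatively formal consequences of the defining relations of $\Al$ and of the group-law axioms for $F$.
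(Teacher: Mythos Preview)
Your proposal is correct, and part 1 is exactly the paper's argument. The interesting point is that in parts 2 and 3 you and the paper swap strategies.

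For part 2, the paper simply expands and appeals to ``a computation using the explicit formulas for $F$ and $\chi$'', while you argue conceptually via associativity of $F$: with $a=F(x_{i+1},\chi(x_i))$ and $b=\chi(x_{i+1})$ one has $F(a,b)=\chi(x_i)$, hence $a+b-\mu_1 ab-\chi(x_i)=\mu_2\,ab\,\chi(x_i)$, and since $b\chi(x_i)=\chi(x_{i+1})\chi(x_i)\in x_ix_{i+1}\E$ the error term dies against the relation $\mu_2 x_i x_{i+1}u_i=0$. This is cleaner than a brute-force expansion and makes transparent why only the weaker relation $\mu_2 x_i x_{i+1}(x_i-x_{i+1})u_i=0$ would already suffice here (note that $a$ itself carries a factor $x_{i+1}-x_i$), which is exactly the remark the paper makes.

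For part 3, the paper instead leverages part 2: it rewrites $(1+\chi(x_i)u_i)-(1+\chi(x_{i+1})u_i)$ as $F(x_{i+1},\chi(x_i))\,u_i\,(1+\chi(x_{i+1})u_i)$ and then cancels $F(x_{i+1},\chi(x_i))$; this is slick but requires justifying the cancellation, which is why the paper insists on the stronger relation so that $x_i-x_{i+1}$ behaves as a non-zero-divisor. Your direct computation using the closed forms of $\chi$ and $F$ avoids that subtlety entirely, at the cost of being less structural. Both approaches work.

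One caveat: your ``up to the sign convention for $\Delta_i$'' is hiding something you should make explicit. With the paper's Definition \ref{generalDDO} one gets
\[
\Delta_i\bigl(1+\chi(x_{i+1})u_i\bigr)=\frac{\chi(x_{i+1})-\chi(x_i)}{F(x_{i+1},\chi(x_i))}\,u_i=-\frac{1-\mu_1 x_i-\mu_2 x_i x_{i+1}}{(1-\mu_1 x_i)(1-\mu_1 x_{i+1})}\,u_i,
\]
so the identity that actually holds (and that the paper's own proof establishes and then uses in Theorem \ref{thmFK1hyp}) is $-\Delta_i(1+\chi(x_{i+1})u_i)=(1+\chi(x_{i+1})u_i)u_i$. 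You should state the sign precisely rather than deferring to a ``convention''. Also, your remark that $\sigma_i$ extends to an action on all of $\Al$ is slightly too strong (the relation $\mu_2 x_j x_{j+1}u_j=0$ for $j=i\pm 1$ is not $\sigma_i$-stable), but this is harmless: for part 3 you only need $\sigma_i$ on the subalgebra generated by $\E$ and $u_i$, where the sole relevant relation $\mu_2 x_i x_{i+1}u_i=0$ is symmetric in $x_i,x_{i+1}$.
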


\begin{proof} 
1. The equality $\alpha_{i+1}(x_{i+1})(1 + x_{i+1}u_i)=\alpha_i(x_{i+1})$
implies $\alpha_{i+1}(x_{i+1})(1 + x_{i+1}u_i)(1 + \chi({x_{i+1}})u_i)=
\alpha_i(x_{i+1})(1 + \chi({x_{i+1}})u_i)$. A straightforward computation
shows that $(1 + x_{i+1}u_i)(1 + \chi({x_{i+1}})u_i)=1$.  

2. To prove the claim, it suffices to prove that
$$(F(x_{i+1},\chi(x_i)) + \chi(x_{i+1}) - \chi(x_i))u_i + \chi(x_{i+1})F(x_{i+1},\chi(x_i))u_i^2=0,$$
or equivalently that
$$(F(x_{i+1},\chi(x_i)) + \chi(x_{i+1}) - \chi(x_i)- \mu_1 \chi(x_{i+1})F(x_{i+1},\chi(x_i)))u_i =0$$
This holds by a computation using the explicit formulas for
$F$ and $\chi$ and the relation $\mu_2x_i x_{i+1}(x_i - x_{i+1}) u_i=0$. We use a stronger relation in the definition of our Hecke algebra since we need $x_i - x_{i+1}$ to be a non zero divisor for the next computation.

3. We have
$$\begin{array}{ll}
  \renewcommand\arraystretch{2}
  -\Delta_i(1+\chi(x_{i+1})u_i) & = \displaystyle{\frac{(1+\chi(x_{i})u_i) - (1+\chi(x_{i+1})u_i)}{F(x_{i+1},\chi(x_i))}} \\ [2em]
  & = \displaystyle{\frac{1+ F(x_{i+1},\chi(x_i))u_i) - 1}{F(x_{i+1},\chi(x_i))}(1+\chi(x_{i+1})u_i) } \\ [2em]
  & = (1 + \chi(x_{i+1})u_i)u_i. \\
\end{array}
$$ 
Here we simplified by $x_i - x_{i+1}$ and the second equality follows from part 2. 
\end{proof}

\begin{proposition}\label{someYBE}
In the above notation, for all $i$ we have the commutation
$$\alpha_i(x_i)\alpha_i(x_{i+1})=\alpha_i(x_{i+1})\alpha_i(x_i).$$
\end{proposition}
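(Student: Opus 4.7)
Plan: I would imitate the proof of the corresponding statement for the multiplicative (i.e., $K$-theoretic) Hecke algebra given by Fomin-Kirillov~\cite[Proposition 5.2]{FK1}. That argument relies on two technical facts for the hyperbolic generalized Hecke algebra $\Al$: first, a ``simplification'' identity $(1+xu_j)(1+yu_j) = 1 + F(x,y)u_j$; second, a Yang-Baxter identity
$$(1+xu_j)(1+F(x,y)u_{j+1})(1+yu_j) = (1+yu_{j+1})(1+F(x,y)u_j)(1+xu_{j+1}).$$
For the multiplicative FGL these follow straightforwardly from $u_j^2 = -\mu_1 u_j$ and the braid relation $u_j u_{j+1} u_j = u_{j+1} u_j u_{j+1}$. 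For the hyperbolic FGL there is an extra denominator $1+\mu_2 xy$ in $F(x,y)$ and the identities require additional input; my first task would be to show that, for $x = x_i$ and $y = x_{i+1}$, every correction term that appears is a multiple of $\mu_2 x_i x_{i+1} u_i$ (after normalization using the braid and quadratic relations) and so vanishes in $\Al$ by the defining relation $\mu_2 x_i x_{i+1} u_i = 0$.

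With these two identities in hand, the proposition follows by induction on $n - i \geq 1$. For $n - i = 1$ it reduces to the symmetry of $(1+x_iu_{n-1})(1+x_{i+1}u_{n-1}) = 1 + F(x_i,x_{i+1})u_{n-1}$, which is immediate from the simplification. For the inductive step, I write $\alpha_i(x) = \alpha_{i+1}(x)(1+xu_i)$, expand $\alpha_i(x_i)\alpha_i(x_{i+1})$, and use that $u_i$ commutes with $u_k$ for $k \geq i+2$ to push the factor $(1+x_iu_i)$ rightward through $\alpha_{i+1}(x_{i+1})$ until it is adjacent to the terminal factor $(1+x_{i+1}u_{i+1})$. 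A single application of the Yang-Baxter identity (applied after inserting a trivial factor of the form $(1+au_{i+1})(1+\chi(a)u_{i+1})=1$, as in Lemma~\ref{FKhypkey}.1, to bring the three-term block into the right shape) swaps the roles of $x_i$ and $x_{i+1}$ at the $u_i$/$u_{i+1}$ interface, after which the inductive hypothesis applied to $\alpha_{i+1}$ completes the computation.

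The main obstacle will be verifying that every $\mu_2$-correction encountered in these manipulations can indeed be absorbed by the single defining relation $\mu_2 x_i x_{i+1} u_i = 0$: that relation involves only $u_i$, whereas correction terms may a priori involve $u_j$ or $u_{j+1}$ for $j > i$. Ruling out ``unabsorbable'' corrections amounts to checking that every relevant $u$-word can be normalized, via the braid and quadratic relations in $\Al$, to a form in which the defining relation applies; this bookkeeping is the technical heart of the argument, and is the point where the precise choice of relation $\mu_2 x_i x_{i+1} u_i = 0$ (rather than a weaker one, as noted in the proof of Lemma~\ref{FKhypkey}.2) becomes essential.
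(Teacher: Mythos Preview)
Your plan introduces the hyperbolic FGL $F$ into identities where it does not belong, and this is what creates the ``main obstacle'' you worry about. Compute directly: $(1+xu_j)(1+yu_j) = 1 + (x+y-\mu_1 xy)u_j$, using only $u_j^2 = -\mu_1 u_j$. The right-hand side is already symmetric in $x,y$, and the expression $x+y-\mu_1 xy$ is the \emph{multiplicative} formal group law, not $F(x,y)$. Your proposed identity $(1+xu_j)(1+yu_j) = 1 + F(x,y)u_j$ is false for $j \neq i$ when $x=x_i$, $y=x_{i+1}$: the defining relation of $\Al$ kills $\mu_2 x_j x_{j+1} u_j$, not $\mu_2 x_i x_{i+1} u_j$, so the $\mu_2$-correction survives. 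The same remark applies to your Yang--Baxter identity.

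The paper's proof is therefore much shorter than your plan suggests. The point is that the commutation $\alpha_i(x)\alpha_i(y)=\alpha_i(y)\alpha_i(x)$ is a statement purely about the subalgebra generated by the $u_j$ over the commutative coefficient ring (the $x_k$ being central). Since the $u_j$ in $\Al$ satisfy \emph{exactly} the same relations as in the Hecke algebra of \cite{FK1} (commutation, braid, and $u_j^2=-\mu_1 u_j$), one may invoke \cite[Corollary~5.4]{FK2} verbatim; neither the hyperbolic FGL nor the extra relation $\mu_2 x_i x_{i+1} u_i = 0$ plays any role in this proposition. If you prefer to argue directly as in your induction, replace $F(x,y)$ everywhere by $x+y-\mu_1 xy$: then all $\mu_2$-corrections vanish identically and the bookkeeping you anticipate disappears.
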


\begin{proof}
Since we have the same relations for the $u_i$ as in \cite{FK1}, 
the proof of their Lemma 2.6 generalises to our situation.
More precisely, we may apply \cite[Corollary 5.4]{FK2} as its assumptions
(see \cite[section 2]{FK2}) are satisfied
in our generalized Hecke algebra. 
\end{proof}

\noindent
\textit{Proof of Theorem \ref{thmFK1hyp}.} From $\S(x_1,\ldots,x_{n-1}) = \alpha_1(x_1) \ldots \alpha_{n-1}(x_{n-1})$ we get
$$ \S(x_1, \ldots,x_{n-1}) = \alpha_1(x_1) \ldots \alpha_i(x_{i+1})(1 + \chi(x_{i+1})u_i)\alpha_{i+2}(x_{i+2}) \ldots \alpha_{n-1}(x_{n-1}).$$
Using Lemma \ref{FKhypkey}.1, this implies that $\Delta_i(\S(x_1,\ldots,x_{n-1}))$ is equal to the following formulae:
$$\begin{array}{l}
\alpha_1(x_1)\ldots\alpha_{i-1}(x_{i-1})\Delta_i\alpha_i(x_i)\alpha_i(x_{i+1})(1 + \chi(x_{i+1})u_i)\alpha_{i+2}(x_{i+2})\ldots \alpha_{n-1}(x_{n-1}) \\
=\alpha_1(x_1)\ldots\alpha_{i-1}(x_{i-1})\alpha_i(x_i)\alpha_i(x_{i+1})
\Delta_i(1 + \chi(x_{i+1})u_i)\alpha_{i+2}(x_{i+2})\ldots \alpha_{n-1}(x_{n-1})\\
= - \alpha_1(x_1)\ldots \alpha_i(x_i) \alpha_i(x_{i+1})
(1 + \chi(x_{i+1})u_i)u_i\alpha_{i+2}(x_{i+2})\ldots \alpha_{n-1}(x_{n-1})\\
= - \alpha_1(x_1)\ldots \alpha_i(x_i)\alpha_i(x_{i+1})
(1 + \chi(x_{i+1})u_i)\alpha_{i+2}(x_{i+2})\ldots \alpha_{n-1}(x_{n-1})u_i.
\end{array}$$
Here the third last equality follows from
Proposition \ref{someYBE} 
and the second last one from Lemma \ref{FKhypkey}.3.
We thus have shown $$-\Delta_i(\S(x_1,\ldots,x_{n-1})=(\S(x_1,\ldots,x_{n-1})) u_i$$
which corresponds precisely to the induction step in Definition
\ref{defLG}, using that $\Delta_{i}= \kappa - C_{i}$
and $u_i^2 = - \kappa u_i$. 
More precisely, write $\S= \sum \hat{\LG}_{w} u_{w_0w}$,
where the sum taken over all $w \in \Sn$. We wish to show that $\hat{\LG}_{w} u_{w_0w} = {\LG}_{\uw} u_{w_0w}$ by a ascending induction on the length of $w$. 
For $w = 1$ the claim is obviously true. Now fix $w \neq 1$ and choose $i$ such that $ws_i$ is reduced. Consider the coefficient of $u_{w_0w}$ in
$$(C_i - \kappa_i)\S= - \Delta_i \S = \S u_i.$$
Using that $u_i^2= - \kappa_i u_i$ and the fact that $w_0ws_i < w_0w$, we deduce that
$$(C_i - \kappa_i) \hat{\LG}_{w} u_{w_0w} = (\hat{\LG}_{ws_i} - \kappa_i \hat{\LG}_{w}) u_{w_0w},$$
hence
$C_i \hat{\LG}_{w}u_{w_0w} =  \hat{\LG}_{w s_i} u_{w_0w}$ as required.
\qed

\begin{remark}
  \label{different}
  Note that the computations from \cite{FK1} cannot be done in the formal Demazure
 algebra of \cite{HMSZ}. E.g., the equality
$$(1 + x_{i+1}u_i)(1 + \chi({x_{i+1}})u_i) = 1$$
which was used to prove Lemma \ref{FKhypkey} above
does not hold, not even for the additive FGL. 
This is related to the failure of
$\kappa_i \Delta_i=\Delta_i \kappa_i$.
\end{remark}

As for hyperbolic formal group laws $\kappa_i$ is independent of $i$
(see Example \ref{kappa}), several other parts in Buch's
article \cite{Bu} on the Littlewood-Richardson rule for $K_0$ easily 
generalize to hyperbolic formal group laws when working with the generalized
Hecke algebra $\Al$ of Definition \ref{generalHecke}. For example, similar to \cite[p. 41]{Bu}, it is possible to introduce
a stable generalized Schubert polynomial $colim \ \LG_{1^m \times \uw}$ of $\LG_{\uw}$
and trying to analyze its behaviour along the lines of \cite[section 6]{FK2}.
Also, there is a well-defined analogon $\LG_{\nu/\lambda}$ of the polynomial
$G_{\nu/\lambda}$ which is crucial for \cite[Theorem 3.1]{Bu}, as the construction
in \cite[p. 41/42]{Bu} provides a reduced word ${\uw}$ rather than just a permutation $w$. 
However, for hyperbolic formal group laws the operators
$C_i$ do no longer satisfy the classical braid relation but
a twisted version of it, namely $C_i C_{i+1} C_i + \mu_2 C_i =C_{i+1} C_i C_{i+1} + \mu_2 C_{i+1}$ 
\cite{HMSZ}. This will lead to additional difficulties 
when arguing inductively using these $C_i$ and the corresponding
geometric operators as e.g. in \cite[section 8]{Bu}. This is
also related to the discussion in \cite[section 6]{LZ}.
On the other hand, Proposition \ref{someYBE} is wrong
already for small values of $n$ and $i$ when replacing
the classical braid relation for the $u_i$ by its
twisted analog in the definition of $\Al$.
We hope to return to these questions in future work. 


\section{Some examples}

\subsection{Polynomials representing some smooth Schubert varieties}

We first compute generalized Schubert polynomials for some of the smooth Schubert varieties considered in Section \ref{section1}. Let $X = \Gr(k,n)$ be a grassmannian and let $\lambda$ be a partition of the form $b^a$. Denote by $\mathfrak{G}_\lambda$ the polynomial in $\Omega^*(G/B) \simeq \L[x_1,\cdots,x_n]/S$ representing the pull-back along the canonical quotient map
$\pi:G/B \to X$ of the cobordism class $[X_\la \to X]$. 
Recall \cite[section 3.2.4]{HML} that the induced map $\pi^*:\Omega^*(\Gr(k,n)) \to
\Omega^*(G/B)$ is a ring monomorphism which identifies
$\Omega^*(\Gr(k,n))$ with an explicit subring of $\L[x_1,\cdots,x_n]/S$.
The results in the sequel may thus be stated in either of these rings.  
(Recall that there is a standard map, see e.g. \cite[p.42]{Bu}, from
partitions to permutations with corresponds to $\pi^*$ and geometric 
operators for $K$-theory and Chow groups.)

\begin{proposition}
In $\Omega^*(G/B)$, we have the formulas 
$$\mathfrak{G}_{(n-k)^a} = (x_{k+1} \cdots x_n)^{k - a} \textrm{ and } \mathfrak{G}_{b^k} = (x_{1} \cdots x_k)^{n - k - b}.$$
\end{proposition}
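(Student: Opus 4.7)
The plan is to realise each smooth Schubert variety as the regular zero locus of a section of an explicit vector bundle on $X = \Gr(k,n)$, identify its cobordism class with the corresponding top Chern class, and then pull back to $G/B$ where the bundles split into line bundles.

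Let $\mathcal{S}$ denote the tautological sub-bundle on $X$ and $\mathcal{Q} = \underline{\kk^n}/\mathcal{S}$ the tautological quotient. From the description in Subsection \ref{subsec:smooth}, the variety
$$X_{b^k} = \{V_k \in X \mid V_k \subset E_{k+b}\}$$
is cut out by the composition $\mathcal{S} \hookrightarrow \underline{\kk^n} \twoheadrightarrow \underline{\kk^n/E_{k+b}}$, viewed as a section of $\mathcal{S}^\vee \otimes \underline{\kk^{n-k-b}}$; and
$$X_{(n-k)^a} = \{V_k \in X \mid E_{k-a} \subset V_k\}$$
is cut out by the composition $\underline{E_{k-a}} \hookrightarrow \underline{\kk^n} \twoheadrightarrow \mathcal{Q}$, viewed as a section of $\mathcal{Q} \otimes \underline{\kk^{k-a}}$. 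In both cases the rank of the bundle equals the codimension of the zero locus in $X$, and the zero locus is smooth (it is a sub-Grassmannian, see Subsection \ref{subsec:smooth}), so the section is regular.

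Next I would invoke the standard fact (see \cite{LM}) that for a regular section of a vector bundle $\mathcal{E}$ on a smooth variety, the cobordism class of the zero locus equals the top Chern class $c_{\mathrm{rk}\,\mathcal{E}}^\Omega(\mathcal{E})$. Since tensoring with a trivial bundle of rank $r$ gives a direct sum of $r$ copies, this yields
$$[X_{b^k}] = \bigl(c_k^\Omega(\mathcal{S}^\vee)\bigr)^{n-k-b} \quad\text{and}\quad [X_{(n-k)^a}] = \bigl(c_{n-k}^\Omega(\mathcal{Q})\bigr)^{k-a}$$
in $\Omega^*(X)$.

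Finally I would pull back along $\pi : G/B \to X$. There, the bundles $\mathcal{S}$ and $\mathcal{Q}$ admit filtrations with successive line-bundle quotients $L_1,\ldots,L_k$ and $L_{k+1},\ldots,L_n$ respectively, and the Borel presentation $\Omega^*(G/B) \simeq \L[x_1,\ldots,x_n]/S$ is set up so that $x_1,\ldots,x_k$ are the Chern roots of $\pi^*\mathcal{S}^\vee$ and $x_{k+1},\ldots,x_n$ those of $\pi^*\mathcal{Q}$. Since top Chern classes of direct sums are ordinary products of the $c_1$'s of the summands (no formal group law enters when one only tensors with trivial bundles), the splitting principle immediately gives
$$\pi^*[X_{b^k}] = (x_1\cdots x_k)^{n-k-b} \quad\text{and}\quad \pi^*[X_{(n-k)^a}] = (x_{k+1}\cdots x_n)^{k-a},$$
as desired. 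The main obstacle is essentially bookkeeping: matching the sign/duality conventions for the $x_i$ in the Borel presentation with the choices of $\mathcal{S}$ versus $\mathcal{S}^\vee$ (and similarly for $\mathcal{Q}$) in the vanishing-locus descriptions. Once this is pinned down the argument is formal, and crucially no FGL-dependent correction appears because all tensor products involved are with trivial bundles.
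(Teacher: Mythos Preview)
Your argument is correct and shares its core with the paper's proof: both identify the relevant Schubert varieties as zero loci of regular sections and invoke the top-Chern-class formula from \cite{LM}. The difference lies in how the exponent arises. The paper first proves a geometric lemma showing $[X_{(n-k)^{k-1}}]^a = [X_{(n-k)^{k-a}}]$ (and similarly for the other family) by intersecting general translates, and only then identifies the single divisor class $[X_{(n-k)^{k-1}}]$ as $c_{n-k}(\mathcal{Q})$. You instead realise $X_{(n-k)^a}$ in one step as the zero locus of a section of $\mathcal{Q}^{\oplus(k-a)}$ (and $X_{b^k}$ as a zero locus in $(\mathcal{S}^\vee)^{\oplus(n-k-b)}$), so the exponent falls out of the Whitney formula for a direct sum rather than from a transversality computation. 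Your route is a bit more streamlined and makes it transparent that no formal-group-law correction appears; the paper's route, on the other hand, is a concrete instance of the product formula of Corollary~\ref{coro-cobor} and thus ties in with the geometric theme of Section~\ref{section1}.
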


\begin{corollary}\label{symmetry}
  The classes of $[X_{(n-k)^a} \to X]$ and $[X_{b^k} \to X]$ are represented by the same polynomial in any oriented cohomology theory.
\end{corollary}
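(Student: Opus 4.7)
The plan is to deduce this directly from the preceding Proposition using the universality of $\Omega^*$.

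First, both $X_{(n-k)^a}$ and $X_{b^k}$ are smooth Schubert varieties by Subsection \ref{subsec:smooth}, being isomorphic to the subgrassmannians $\Gr(a,n-k+a)$ and $\Gr(k,k+b)$ respectively. Hence for any oriented cohomology theory $A^*$, the class $[X_\la \to X]$ is well defined in $A^*(X)$ without choosing a resolution, as the pushforward of $1$ along the closed embedding.

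Second, by \cite{LM}, $\Omega^*$ is the universal oriented cohomology theory, so for each such $A^*$ with coefficient ring $R$ there is a unique morphism of oriented cohomology theories $\theta_A : \Omega^* \to A^*$. This morphism commutes with pullbacks and with pushforwards along projective morphisms; in particular it sends $[X_\la \to X]_\Omega$ to $[X_\la \to X]_A$. Moreover, under the Borel-type presentations $\Omega^*(G/B) \simeq \L[x_1,\ldots,x_n]/S$ and $A^*(G/B) \simeq R[x_1,\ldots,x_n]/S$, the induced ring map extends the coefficient change $\L \to R$ classifying the FGL of $A^*$ and fixes each $x_i$.

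Third, the Proposition identifies $\pi^*[X_{(n-k)^a} \to X]$ and $\pi^*[X_{b^k} \to X]$ in $\Omega^*(G/B)$ with the monomials $(x_{k+1} \cdots x_n)^{k-a}$ and $(x_1 \cdots x_k)^{n-k-b}$. These involve no coefficient from $\L$ beyond $1 \in \Z$, so their images under $\theta_A$ are the literally identical monomials in $R[x_1,\ldots,x_n]/S$. Combined with the compatibility of $\theta_A$ with $\pi^*$, this shows that the same monomials represent $\pi^*[X_{(n-k)^a} \to X]$ and $\pi^*[X_{b^k} \to X]$ in $A^*(G/B)$ for every oriented cohomology theory $A^*$, which is the content of the corollary.

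The real work lies entirely in the Proposition, i.e.\ in establishing the monomial form of the representative; once this is done, the corollary is a formal consequence of functoriality. If the representative in $\Omega^*$ involved a nontrivial element of $\L$ of positive degree, the image under $\theta_A$ would depend on the FGL of $A^*$, which is precisely the obstruction one sees for less symmetric Schubert cells, cf.\ Proposition \ref{gr24}.
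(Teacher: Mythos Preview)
Your proposal is correct and follows essentially the same approach as the paper's one-line proof: the Proposition exhibits both classes as integer-coefficient monomials in the $x_i$, hence independent of the formal group law under any specialization $\L \to R$. Your version spells out more carefully the role of the universality of $\Omega^*$ and the compatibility of $\theta_A$ with pushforwards and Chern classes, but the underlying idea is identical.
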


\begin{proof}
  Indeed we have $[X_{(n-k)^a} \to X] = \mathfrak{G}_{(n-k)^a}$ and $[X_{b^k} \to X] = \mathfrak{G}_{b^k}$, so this is independent of the FGL.
\end{proof}

\begin{remark}
  We will see in the next subsection that this is no longer the case for the other classes of smooth Schubert varieties. Indeed, in Proposition \ref{gr24}, we prove that the class of the line in the elliptic cohomology of $\Gr(2,4)$ is given by $x_1x_2(x_1+x_2) - \mu_1 x_1^2x_2^2$ and therefore depends on the FGL.
\end{remark}

\begin{proof}
We prove the first formula. For this, we start with an easy lemma.

\begin{lemma}
In $\Omega^*(X)$, we have $[X_{(n-k)^{k-1}}]^a = [X_{(n-k)^{k-a}}]$ and $[X_{(n-k-1)^k}]^b = [X_{(n-k-b)^k}]$.
\end{lemma}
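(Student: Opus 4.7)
I would prove the first identity by a Kleiman--Bertini / transversal-intersection argument; the second is strictly analogous. The key geometric inputs are the explicit descriptions of the smooth Schubert varieties involved, together with the fact that transversal intersections of smooth subvarieties compute the cup product in any oriented cohomology theory.

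The first step is to record that $X_{(n-k)^{k-1}}=\{V_k \in X \mid E_1 \subset V_k\}$ is the subvariety of $k$-planes containing a fixed line, and more generally $X_{(n-k)^{k-a}}=\{V_k \mid E_a \subset V_k\} \cong \Gr(k-a,n-a)$ is the variety of $k$-planes containing a fixed $a$-dimensional subspace. For any $a$-dimensional subspace $M \subset \kk^n$, the subvariety $\{V_k \mid M \subset V_k\}$ is a $\GL_n(\kk)$-translate of $X_{(n-k)^{k-a}}$, hence smooth and with the same class in $\Omega^*(X)$, since translation by the connected group $\GL_n(\kk)$ acts trivially on $\Omega^*(X)$.

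The main step is to choose $a$ general elements $g_1,\dots,g_a \in \GL_n(\kk)$ and consider the translates $Y_i := g_i \cdot X_{(n-k)^{k-1}} = \{V_k \mid g_i E_1 \subset V_k\}$, each with class $[X_{(n-k)^{k-1}}]$ by the previous paragraph. Kleiman transversality applied iteratively to the transitive $\GL_n(\kk)$-action on $X$ gives, for generic $g_i$, that the subvarieties $Y_1,\dots,Y_a$ meet transversally; since the lines $g_iE_1$ are then linearly independent, the scheme-theoretic intersection $\bigcap_{i=1}^a Y_i=\{V_k \mid \langle g_1E_1,\dots,g_aE_1\rangle \subset V_k\}$ is a smooth subvariety of $X$ isomorphic to $\Gr(k-a,n-a)$ and thus a translate of $X_{(n-k)^{k-a}}$. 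The transversal-intersection formula $[Y]\cdot[Z]=[Y\cap Z]$ in $\Omega^*(X)$ for smooth transversal $Y,Z$ then yields
$$[X_{(n-k)^{k-1}}]^a = \prod_{i=1}^a [Y_i] = \Bigl[\bigcap_{i=1}^a Y_i\Bigr] = [X_{(n-k)^{k-a}}].$$
The second formula is obtained by the identical argument starting from $X_{(n-k-1)^k}=\{V_k \subset E_{n-1}\}$, whose $b$ general translates impose the condition that $V_k$ lies in a generic codimension-$b$ subspace, yielding $[X_{(n-k-b)^k}]$.

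The main subtle point is the formal justification, within algebraic cobordism, of Kleiman--Bertini transversality and of the identity $[Y]\cdot[Z]=[Y\cap Z]$ for transversal smooth $Y,Z$: both are standard in Chow theory and $K$-theory, and in $\Omega^*$ they follow from the same fibre-product techniques already used in Section \ref{subsec:smooth} (and are a general feature of oriented cohomology theories). Note that a naive iteration of Corollary \ref{coro-cobor} does not directly give the lemma, because the canonical Bott--Samelson resolution $\Xt_{b^a} \to X_{b^a}$ of a rectangular smooth Schubert variety is in general a proper birational morphism of blow-up type rather than an isomorphism, so $[\Xt_{b^a}] \neq [X_{b^a}]$ in $\Omega^*(X)$ and the iteration cannot be closed without additional input.
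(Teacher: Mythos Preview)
Your proof is correct and follows essentially the same approach as the paper: both identify these smooth Schubert varieties explicitly as sub-Grassmannians of $k$-planes containing (resp.\ contained in) a fixed subspace, compute the product as a transversal intersection of smooth subvarieties, and recognize the result as a $\GL_n(\kk)$-translate of the target Schubert variety. The only minor difference is that the paper argues inductively, proving $[X_{(n-k)^{k-1}}]\cdot[X_{(n-k)^{k-a}}]=[X_{(n-k)^{k-a-1}}]$ by intersecting $X_{(n-k)^{k-1}}$ with the \emph{opposite} Schubert variety $X^{(n-k)^{a}}$ (so transversality comes for free from Richardson-variety theory and no appeal to Kleiman--Bertini is needed), whereas you intersect $a$ generic translates at once.
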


\begin{proof}
We need to prove the formula $[X_{(n-k)^{k-1}}] \cdot [X_{(n-k)^{k-a}}] = [X_{(n-k)^{k-a-1}}]$. But the first class is represented by the subgrassmannian $X_{n-k} = \{ V_k \in X \ | \ E_1 \subset V_k \}$ while the second class is represented by $X^{{(n-k)^{k-a}}^\vee} = X^{(n-k)^{a}} = \{ V_k \in X \ | \ E^{a} \subset V_k \}$. The product is represented by the intersection of these varieties and since $E_1$ and $E^{a}$ do not meet we get
$$X_{n-k} \cap  X^{(n-k)^{a}} = \{ V_k \in X \ | \ E_1 \oplus E^{a} \subset V_k \}.$$
This last variety is a translate via the action of $\GL_n(\kk)$ of 
$X_{(n-k)^{k-a-1}} = \{ V_k \in X \ | \ E_{a+1} \subset V_k \}$ proving the first formula. The second formula is obtained along the same lines or deduced from the first one using the isomorphism $\Gr(k,n) \simeq \Gr(n-k,n)$.
\end{proof}

Finally since $X_{(n-k)^{k-1}}$ 
is the zero locus of a section of the tautological quotient 
bundle whose Chern roots are $x_{k+1},\cdots,x_n$, the first equality
of the proposition follows (see for example the proof of \cite[Lemma 6.6.7]{LM}).For the second formula, we just need to remark that $X_{(k-1)^k}$ is the zero locus of a global section of the dual of the tautological subbundle and apply the same method (or use the isomorphism $\Gr(k,n) \simeq \Gr(n-k,n)$ again).
\end{proof}

\subsection{Elliptic cohomology of $\Gr(2,4)$}

In this subsection, we present explicit results concerning elliptic cohomology, \emph{i.e} for the hyperbolic FGL, of $\Gr(2,4)$. We compute the polynomial representing any Bott-Samelson class as well as their products. 

Let $X = \Gr(2,4)$ and let $\lambda$ be a partition. Denote by $\LG_\lambda$ the polynomial in $\Omega^*(G/B) \simeq \L[x_1,x_2,x_3,x_4]/S$ representing the pull-back in $G/B$ of the cobordism class $[\Xt_\la \to X]$ where $\Xt_\lambda$ is the Bott-Samelson resolution of $X_\lambda$. 

Recall the hyperbolic FGL of \cite[Example 63]{BB}
as in subsection 3.3. above. By the universal property of
the formal group law of $\Omega^*$ established in
\cite{LM}, we have a unique morphism of formal group laws,
which yields in particular a ring morphism $\L \to \Z[\mu_1, \mu_2]$.
This map is called "Krichever genus" and studied in detail
in loc. cit.. In particular, $\mu_i$ has cohomological degree
$-i$ for $I=1,2$.
Note that (unlike in the bigraded case, see e.g. \cite{LYZ})
this always yields an oriented cohomology theory, as there is no
Landweber exactness condition to check. As the theory
$E^*(-)$ is oriented in the sense of \cite{LM}, 
the analogs of the above theorems also hold for $E^*(G/B)$ 
and $E^*(\Gr(2,4))$, and the natural transformation
$\Omega^*(-) \to E^*(-)$ commutes in particular with 
the ring monomorphisms $\pi^*$.
Below, we use the notations $\Xt_{\la}$ and $\LG_{\la}$
for elements in $E^*(-)$ as well.
 
\begin{proposition}\label{gr24}
In $E^*(\Gr(2,4))$, we have the following formulas:
$$\begin{array}{l}
\LG_{(00)} = x_1^2x_2^2, \\
\LG_{(10)} = x_1x_2(x_1 + x_2) - \mu_1 x_1^2x_2^2, \\
\LG_{(20)} = x_1^2 + x_1x_2 + x_2^2 - \mu_1 x_1x_2(x_1 + x_2) - \mu_2 x_1^2x_2^2, \\
\LG_{(11)} = x_1x_2 - \mu_2 x_1^2x_2^2, \\
\LG_{(21)} = x_1 + x_2 - \mu_1 x_1x_2 - \mu_2 x_1x_2(x_1 + x_2) - \mu_1\mu_2 x_1^2x_2^2, \\
\LG_{(22)} = 1 - \mu_2 (x_1 + x_2)^2 + \mu_1^2\mu_2 x_1^2x_2^2.\\
\end{array}$$
\end{proposition}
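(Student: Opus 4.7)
The plan is to verify the six formulas case by case, combining three tools: the explicit smooth-Schubert formulas of the Proposition preceding Corollary \ref{symmetry}, the product formulas of Corollary \ref{coro-cobor}, and direct application of Definition \ref{defLG}.

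A key preliminary observation is that the Bott-Samelson resolution $\Xt_\lambda \to X_\lambda$ described in Section 2.2 is an isomorphism only when the Young diagram of $\lambda$ has at most one box; for the other partitions it is a non-trivial iterated $\mathbb{P}^1$-bundle, even when $X_\lambda$ is smooth. Hence for $\lambda \in \{(0,0),(1,0)\}$ we have $\LG_\lambda = \G_\lambda$ and no $\mu_2$-correction appears: $\LG_{(0,0)}$ is the point class, given in the Borel presentation by the Schur polynomial $s_{(2,2)}(x_1,x_2) = x_1^2 x_2^2$, and $\LG_{(1,0)}$ agrees with the classical $K$-theoretic Grothendieck polynomial $x_1 x_2(x_1+x_2) - \mu_1 x_1^2 x_2^2$ (obtained e.g.\ by applying Definition \ref{defLG} along a reduced word for the permutation $w_{(1,0)}\cdot w_{0,P}$).

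For the smooth sub-Grassmannian cases $\lambda \in \{(1,1),(2,0),(2,2)\}$, the Proposition preceding Corollary \ref{symmetry} yields $\G_{(1,1)} = x_1 x_2$, $\G_{(2,0)} = x_3 x_4$ and $\G_{(2,2)}=1$. Using the relations $x_3+x_4 \equiv -(x_1+x_2)$ and $x_3 x_4 \equiv x_1^2 + x_1 x_2 + x_2^2 \pmod{S}$, one rewrites these as symmetric polynomials in $x_1,x_2$. The Bott-Samelson class $\LG_\lambda$ differs from $\G_\lambda$ by corrections arising from the iterated blow-up structure of $\Xt_\lambda \to X_\lambda$, computed step by step via the projective bundle formula for the hyperbolic FGL; these produce precisely the $\mu_2$-terms in the stated formulas. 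For the unique non-smooth case $\lambda = (2,1)$, I would apply Definition \ref{defLG} directly: starting from $\LG_1 = x_1^3 x_2^2 x_3$, apply the operators $C_i = (\id+\sigma_i)\frac{1 - \mu_1 x_{i+1} - \mu_2 x_i x_{i+1}}{x_i - x_{i+1}}$ along a reduced word for $w_{(2,1)}\cdot w_{0,P}$ in $S_4$ (with $w_{0,P} = s_1 s_3$) and reduce modulo $S$. Alternatively, Corollary \ref{coro-cobor} with $b^a = 2^1$ yields $\G_{(2,0)}\cdot\LG_{(2,1)} = \LG_{(1,0)}$, from which $\LG_{(2,1)}$ may be extracted.

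The main obstacle lies in the algebraic bookkeeping: reducing each polynomial modulo $S$ requires iterated use of the relations $x_3+x_4 \equiv -(x_1+x_2)$, $x_3 x_4 \equiv x_1^2 + x_1 x_2 + x_2^2$ and $x_i^4 \equiv 0$ (Lemma \ref{pt}), while carefully tracking the $\mu_1,\mu_2$ contributions. Once this is done, the formulas can be cross-checked via further applications of Corollary \ref{coro-cobor}; for instance $\LG_{(0,0)} = \G_{(1,1)}\cdot\LG_{(1,1)} = x_1 x_2 \cdot (x_1 x_2 - \mu_2 x_1^2 x_2^2) \equiv x_1^2 x_2^2 \pmod{S}$, since $x_1^3 x_2^3$ has degree exceeding $\dim X = 4$ and vanishes modulo $S$ by Lemma \ref{pt}.
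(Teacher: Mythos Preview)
Your proposal has a genuine gap in the treatment of $(1,1)$, $(2,0)$, and $(2,2)$. You correctly observe that for these partitions the Bott-Samelson map $\Xt_\lambda \to X_\lambda$ is not an isomorphism (for instance $\Xt_{(1,1)} \to X_{(1,1)}$ is the blow-up of $\p^2$ at a point), so that $\LG_\lambda$ differs from the smooth-variety class $\mathfrak{G}_\lambda$ of the Proposition preceding Corollary~\ref{symmetry}. But your assertion that the correction terms are ``computed step by step via the projective bundle formula for the hyperbolic FGL'' and ``produce precisely the $\mu_2$-terms'' is not carried out and is not obviously simpler than the direct calculation. Concretely, establishing $\LG_{(1,1)} - \mathfrak{G}_{(1,1)} = -\mu_2\,x_1^2x_2^2$ amounts to computing the image of $[\mathbb{F}_1] - [\p^2] \in \L^{-2}$ under the Krichever genus $\L \to \Z[\mu_1,\mu_2]$; for $(2,2)$ the resolution is a four-step tower of $\p^1$-bundles and the correction is far from transparent. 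This is the heart of the proposition, not mere bookkeeping.

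Two further points. Your alternative for $(2,1)$ --- solving $\mathfrak{G}_{(2,0)} \cdot \LG_{(2,1)} = \LG_{(1,0)}$ for $\LG_{(2,1)}$ --- fails because multiplication by $\mathfrak{G}_{(2,0)}$ has nontrivial kernel in $E^*(X)$ (Corollary~\ref{coro-cobor} shows it annihilates every $[\Xt_\mu]$ with $\mu \not\geq (2,0)$), so the equation does not determine $\LG_{(2,1)}$. And your claim that $\LG_{(1,0)}$ equals the $K$-theoretic Grothendieck polynomial is precisely what has to be proved; the absence of $\mu_2$-terms is an output of the computation, not an input. Where your proposal is actually concrete --- applying the operators $C_i$ of Definition~\ref{defLG} to $\LG_1 = x_1^3 x_2^2 x_3$ along a suitable reduced word --- it coincides with the paper's proof, which carries this out uniformly for all six classes (e.g.\ $\pi^*[\Xt_{(1,1)}] = C_1C_3C_2C_1(\LG_1)$), with the reductions modulo $S$ done by computer. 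Your cross-checks via Corollary~\ref{coro-cobor} are valid consistency tests but presuppose the formulas rather than derive them.
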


\begin{proof}
Since the fiber of the map $\pi : G/B \to \Gr(2,4)$ is isomorphic to $\p^1 \times \p^1$, the pull-back $\pi^*[\Xt_{\la}] \in E^*(G/B)$ of a Bott-Samelson class in $\Gr(2,4)$ is again a Bott-Samelson class $X_{\uw}$. (Note that this is not true anymore in higher dimensions.) Moreover in this case, we can explicitely
write down the reduced word $\uw$ corresponding to $\lambda$

under $\pi^*$. Now we wish to compute $\LG_{\lambda} \in E^*(\Gr(2,4)) \subset E^*(G/B)$. The above together with the results of \cite{HK} imples that
both in $\Omega^*(G/B)$ and $E^*(G/B)$, we have $\pi^*[X_{(00)}] = C_1C_3(\LG_1)$, $\pi^*[X_{(10)}] = C_1C_3C_2(\LG_1)$, $\pi^*[X_{(20)}] = C_1C_3C_2C_3(\LG_1)$, $\pi^*[X_{(11)}] = C_1C_3C_2C_1(\LG_1)$, $\pi^*[X_{(21)}] = C_1C_3C_2C_1C_3(\LG_1)$ and $\pi^*[X_{(22)}] = C_1C_3C_2C_1C_3C_2(\LG_1)$. Now the results follow
from $\LG_1=x_1^3 x_2 ^2 x_3$
and explicit computations with the $C_i$ done with the help of a computer.
\end{proof}

We computed everything in elliptic cohomology for sake of simplicity, but a similar computation can be done in $\Omega^*(X)$. 

\begin{remark}
In elliptic cohomology, the mutiplication formula for the square of the hyperplane class in the Bott-Samelson basis is the same as the one in $K$-theory, namely $\LG_{(21)}^2 = \LG_{(20)} + \LG_{(11)} - \mu_1 \LG_{(10)}.$
\end{remark}

\noindent
Jens Hornbostel, 
Bergische Universit\"at Wuppertal,
Fachgruppe Mathematik und Informatik,
Gau{\ss}strasse 20, 42119 Wuppertal, Germany.

\noindent
\texttt{hornbostel@math.uni-wuppertal.de}.

\medskip

\noindent
Nicolas Perrin, Laboratoire de Math\'ematiques de Versailles, UVSQ, CNRS, Universit\'e Paris-Saclay, 78035 Versailles, France.

\noindent
\texttt{nicolas.perrin@uvsq.fr}.


\begin{thebibliography}{MMM}
\bibitem{BS} R.~Bott, H.~Samelson,
\emph{Applications of the theory of Morse to symmetric spaces}, Amer. J. Math. {\bf 80} (1958), 964--1029.
\bibitem{BE1} {P. Bressler, S. Evens}, {\it The Schubert calculus, Braid relations, and Generalized Cohomology},
Trans. Amer. Math. Soc.  {\bf 317}  (1990),  no. 2, 799--811.
\bibitem{brion-polo} M.~Brion, P.~Polo, 
\emph{Generic singularities of certain Schubert varieties}, Math. Z. {\bf 231} (1999) no. 2, 301--324.
\bibitem{Bu} {A. S. Buch}, {\it A Littlewood-Richarson rule
for the $K$-theory of Grassmannians}, Acta Math. 189 (2002), 37--78.
\bibitem{BCMP} A.~S.~Buch, , P.-E.~Chaput, L.~C.~Mihalcea, N.~Perrin, 
\emph{Positivity of minuscule quantum $K$-theory}. In preparation.
\bibitem{BB} {V. Buchstaber, E. Bunkova}, {\it Elliptic formal group laws, integral Hirzebruch
genera and Krichever genera}, Preprint arXiv:1010.0944.
\bibitem{CPZ} {B. Calm\`es, V. Petrov, K. Zainoulline}, {\it
Torsion indices and oriented cohomology of complete flags}, Ann. ENS. (4) {\bf 46} (2013), no. 3, 405--448 (2013).
\bibitem{CMP2} P.-E. Chaput, L.~Manivel, and N.~Perrin, 
\emph{Quantum cohomology of minuscule homogeneous spaces. II. Hidden symmetries.} Int. Math. Res. Not. IMRN 2007, no. \textbf{22}.
\bibitem{CMP4} P.-E. Chaput, L.~Manivel, and N.~Perrin, 
\emph{Affine symmetries of the equivariant quantum cohomology ring of rational homogeneous spaces}. Math. Res. Lett. \textbf{16} (2009), no. 1, 7--21. 
\bibitem{D} {M. Demazure}, {\it D\'esingularisation des vari\'et\'es de Schubert
g\'en\'era\-lis\'ees}, Ann. ENS. 4 {\bf 7} (1974), 53--88.
\bibitem{FK1} {S. Fomin and A. Kirillov}, {\it Grothendieck polynomials
and the Yang-Baxter equation}, Proceedings of the Sixth Conference in Formal Power Series and Algebraic Combinatorics, DIMACS, 1994, 183--190.
\bibitem{FK2} {S. Fomin and A. Kirillov}, {\it The Yang-Baxter equation, symmetric functions, and Schubert polynomials}, Discrete Math. {\bf 153} (1996), 123--143.
\bibitem{Fu} {W. Fulton}, {\it Intersection theory},
Second edition. Ergebnisse der Mathematik und ihrer Grenzgebiete. 3. Folge.
A Series of Modern Surveys in Mathematics, 2. Springer-Verlag, Berlin, 1998.
\bibitem{GR} V. Gasharov and V. Reiner, {\it Cohomology of smooth Schubert varieties in partial flag manifolds}. J. London Math. Soc. (2) {\bf 66} (2002), no. 3, 550--62.
\bibitem{H} H.~C.~Hansen 
\emph{On cycles on flag manifolds}, Math. Scand. {\bf 33} (1973), 269--274.
\bibitem{HK} {J. Hornbostel and  V. Kiritchenko}, {\it
Schubert calculus for algebraic cobordism}, J. Reine Angew. Math. (Crelle) 
{\bf 656} (2011), 59--86. 
\bibitem{HM} {T. Hudson and T. Matsumura}, {\it Segre classes
and Kempf-Laskov formula in algebraic cobordism},
preprint 2016 on the arxiv.
\bibitem{HM2} {T. Hudson and T. Matsumura}, {\it Vexillary degeneracy loci classes in K-theory and algebraic cobordism },
preprint 2017 on the arxiv.
\bibitem{HML} {J. Heller, J. Malag\'on-L\'opez}, {\it Equivariant 
algebraic cobordism}, J. Reine Angew. Math. {\bf 684} (2013), 87--112.
\bibitem{HMSZ} {A. Hoffnung, J. Malag\'on-L\'opez, A.
Savage and K. Zainoulline}, {\it Formal Hecke algebras and algebraic oriented cohomology theories}, Selecta Math. (N.S.) {\bf 20} (2014), no. 4, 1213--1245. 
\bibitem{bertini} S.~L.~Kleiman,
\emph{The transversality of a general translate}, Comp. Math. {\bf 28} (1974), no. 3, 287--297.
\bibitem{LB} {V. Lakshmibai, J. Brown}, {\it The Grassmannian Variety},
Developments in Mathematics vol. 42, Springer 2015.  
\bibitem{La} {M. Lazard}, {\it Sur les groupes de Lie formels \`a un param\`etre,} Bull. Soc. Math. France {\bf 83} (1955), 251-274.
\bibitem{LM} {M.~Levine, F.~Morel}, {\it Algebraic cobordism},
 Springer Monographs in Mathematics. Springer, Berlin, 2007.
\bibitem{LP} {M.~Levine, R.~Pandharipande},
{\it Algebraic cobordism revisited}, Invent. math. {\bf 176} (2009), 63--130
\bibitem{LYZ} {M.~Levine, Y. Yang, G. Zhao}, {\it Algebraic elliptic cohomology theory and flops},
Preprint arXiv:1311.2159.
\bibitem{LZ} {C. Lenart, K. Zainoulline}, {\it Towards generalized
cohomology Schubert calculus via formal root polynomials}. Preprint  
arXiv:1408.5952v2.
\bibitem{MA} {P. Magyar}, {\it Schubert polynomials and Bott-Samelson varieties}. Comment. Math. Helv. {\bf 73} (1998), no. 4, 603--636.
\bibitem{Ma} {L. Manivel}, {\it  Symmetric functions, Schubert polynomials
and degeneracy loci}, translated from the 1998 French original by John R. Swallow. SMF/AMS Texts and Monographs, 6. Cours Sp\'ecialis\'es, 3. American Mathematical Society, Providence, RI; Soci\'et\'e Math\'ematique de France, Paris, 2001.
no. 2, 323--340.
\bibitem{small-reso} N.~Perrin, 
\emph{Small resolutions of minuscule Schubert varieties}. Comp. Math. {\bf 143} (2007), no. 5, 1255--1312. 
\bibitem{gorenstein}  N.~Perrin, 
\emph{The Gorenstein locus of minuscule Schubert
  varieties}. Adv. Math. {\bf 220} (2009), no. 2, 505--522. 
\bibitem{seidel} P.~Seidel, \emph{$\pi_1$ of symplectic automorphism groups and invertibles in quantum homology rings}, Geom. Funct. Anal. \textbf{7} (1997), no. 6, 1046--1095.
\bibitem{Ze} A.~Zelevinsky, \emph{Small resolutions of singularities of Schubert varieties}, Funktsional. Anal. i Prilozhen. \textbf{17} (1983), no. 2, 75--77. 
\end{thebibliography}
\end{document}